\newtheorem{theorem}{Theorem}[section]
\newtheorem{lemma}[theorem]{Lemma}
\newtheorem{corollary}[theorem]{Corollary}
\newtheorem{remark}[theorem]{Remark}
\newtheorem{proposition}[theorem]{Proposition}
\newtheorem{definition}[theorem]{Definition}
\newtheorem{example}[theorem]{Example}
\newtheorem{problem}[theorem]{Problem}
\numberwithin{equation}{section}
\newcommand{\NN}{\mathbb{N}}
\newcommand{\DD}{\mathcal{D}}
\newcommand{\GG}{\mathfrak{G}}
\newcommand{\dd}{(\mathbf{D})}
\newcommand{\UU}{\mathcal{U}}
\newcommand{\FF}{\mathcal{F}}
\newcommand{\MM}{\mathbf{M}}
\newcommand{\Nn}{\mathcal{N}}
\newcommand{\nn}{\mathfrak{n}}
\newcommand{\Pp}{\mathfrak{P}}
\author{S. S. Gabriyelyan, J. K{\c{a}}kol}
\address{Department of Mathematics, Ben-Gurion University of the Negev, Beer-Sheva, P.O. 653, Israel}
\email{saak@math.bgu.ac.il}
\address{Faculty of Mathematics and Informatics, A. Mickiewicz University\\ $61-614$ Pozna{\'n}, Poland}
\email{kakol@amu.edu.pl}
\thanks{The first named author was partially supported by Israel Science Foundation grant 1/12.}
\thanks{The second named author was  supported by Generalitat Valenciana, Conselleria d'Educaci\'{o}, Cultura i Esport, Spain, Grant PROMETEO/2013/058.}
\begin{document}

\title[On topological spaces with  certain local countable networks]{On topological spaces  and topological groups  with  certain  local countable networks}

\subjclass[2000]{Primary 54E18, 54H11; Secondary 46A03,   54E52}

\keywords{$cn$-network, the strong Pytkeev property, small base, Baire space, topological group, function space, (free) locally convex space, free (abelian) topological group}

\begin{abstract}
Being motivated by the study of the space $C_c(X)$ of all continuous real-valued functions on a Tychonoff  space $X$ with the compact-open topology, we introduced in \cite{GK-GMS1} the concepts of a $cp$-network and a $cn$-network (at a point $x$) in $X$. In the present paper we describe the topology of  $X$ admitting  a countable $cp$- or $cn$-network at a point $x\in X$. This description  applies to provide new results about the strong Pytkeev property, already well recognized and applicable concept originally introduced by Tsaban and Zdomskyy \cite{boaz}. We show that a Baire topological group $G$ is metrizable if and only if $G$ has the strong Pytkeev property. We prove also that a topological group $G$ has a countable $cp$-network if and only if $G$ is separable and has a countable $cp$-network at the unit. As an application we show, among the others, that the space $D'(\Omega)$ of distributions over open $\Omega\subseteq\mathbb{R}^{n}$ has a countable $cp$-network, which  essentially improves the well known fact stating that  $D'(\Omega)$ has countable tightness. We show that, if $X$ is an $\mathcal{MK}_\omega$-space, then the free topological group $F(X)$ and the free locally convex space $L(X)$ have a countable \mbox{$cp$-network}. We prove that a topological vector space $E$ is $p$-normed (for some \mbox{$0<p\leq 1$}) if and only if $E$ is Fr\'echet-Urysohn and admits a fundamental sequence of bounded sets.
\end{abstract}

\maketitle

\section{Introduction}

All topological spaces  are assumed to be Hausdorff.
Various topological properties generalizing metrizability have been studied intensively by topologists and analysts, especially like first countability, Fr\'{e}chet--Urysohness, sequentiality and countable tightness (see \cite{Eng,kak}). It is well-known that
\[
\xymatrix{
\mbox{metric} \ar@{=>}[r] & \mbox{first}  \atop \mbox{countable} \ar@{=>}[r] & \mbox{Fr\'{e}chet--} \atop \mbox{Urysohn} \ar@{=>}[r] & \mbox{sequential} \ar@{=>}[r] & \mbox{countable}  \atop \mbox{ tight}},
\]
and none of these implications can be reversed.

One of the most immediate extensions of the class of separable metrizable spaces are the classes of cosmic and $\aleph_{0}$-spaces in sense of Michael \cite{Mich}.
\begin{definition}[see \protect\cite{Mich}]\label{def-Cosmic-Spase}
A topological space $X$ is called

$\bullet$ \emph{cosmic}, if $X$ is a regular space with a countable network (a
family $\mathcal{N}$ of subsets of $X$ is called a \emph{network} in $X$ if,
whenever $x\in U$ with $U$ open in $X$, then $x\in N\subseteq U$ for some $N
\in\mathcal{N}$);

$\bullet$ an \emph{$\aleph_0$-space}, if $X$ is a regular space with a countable $k$-network (a family $\mathcal{N}$ of subsets of $X$ is called a \emph{$k$-network} in $X$ if, whenever $K\subseteq U$ with $K$ compact and $U$ open in $X$, then $K\subseteq \bigcup \mathcal{F}\subseteq U$ for some finite family $\mathcal{F} \subseteq\mathcal{N}$).
\end{definition}
These classes of topological spaces were intensively studied in \cite{Gao}, \cite{Guth}, \cite{Mich} and \cite{Mich1}.

Having in mind the Nagata-Smirnov metrization theorem, Okuyama \cite{Oku} and O'Meara \cite{OMe2} introduced the classes of $\sigma$-spaces and $\aleph$-spaces, respectively.
\begin{definition} \label{def-Sigma-Spase}
A topological space $X$ is called

$\bullet$ (\cite{Oku}) a {\em $\sigma$-space} if $X$ is regular and has a $\sigma$-locally finite network.

$\bullet$ (\cite{OMe2}) an {\em $\aleph$-space} if $X$ is regular and has a $\sigma$-locally finite $k$-network.
\end{definition}
Any metrizable space $X$ is an \mbox{$\aleph$-space}. O'Meara \cite{OMe} proved that an \mbox{$\aleph$-space} which is either first countable or locally compact is metrizable. Every compact subset of a $\sigma$-space is metrizable \cite{Morita}. Further results see \cite{gruenhage}.

Pytkeev \cite{Pyt} proved that every sequential space satisfies the property, known actually as the {\it Pytkeev property}, which is stronger than countable tightness: a topological space $X$ has the {\it Pytkeev property} if for each \mbox{$A\subseteq X$} and each $x\in \overline{A}\setminus A$, there are infinite subsets $A_1, A_2, \dots $ of $A$ such that each neighborhood of $x$ contains some $A_n$. Tsaban and Zdomskyy \cite{boaz} strengthened this property as follows. A  topological space $X$ has  the {\it strong Pytkeev property} if for each $x\in X$, there exists a countable family $\DD$ of subsets of $Y$,   such that for each neighborhood $U$ of $x$ and  each $A\subseteq X$ with $x\in \overline{A}\setminus A$, there is $D\in\DD$ such that $D\subseteq U$ and $D\cap A$ is infinite. Next,  Banakh \cite{Banakh}  introduced the concept  of the Pytkeev network in $X$ as follows: A family $\mathcal{N}$ of subsets of a topological space $X$ is called a  {\em Pytkeev network at a point $x\in X$} if $\Nn$ is a network at $x$ and for every open set $U\subseteq X$ and a set $A$ accumulating at $x$ there is a set $N\in\Nn$ such that $N\subseteq U$ and $N\cap A$ is infinite. Hence $X$ has the strong Pytkeev property if and only if $X$ has a countable Pytkeev network at each point $x\in X$.

In \cite{GKL2} we proved that the space $C_c(X)$  has the strong Pytkeev property for every \v{C}ech-complete Lindel\"{o}f space $X$. For the proof of this result we  constructed a family $\DD$ of sets in $C_c(X)$ such that for every neighborhood $U_\mathbf{0}$ of the zero function $\mathbf{0}$ the union $\bigcup \{ D \in\DD : \mathbf{0}\in D \subseteq U_\mathbf{0}\}$ is a  neighborhood of $\mathbf{0}$ (see the condition $\dd$ in \cite{GKL2}). Having in mind this idea for $C_{c}(X)$  we proposed in \cite{GK-GMS1} the following  types of networks which will be applied in the sequel.
\begin{definition}[\cite{GK-GMS1}] \label{def2}
A family $\Nn$ of subsets of a topological space $X$ is called

$\bullet$ a {\em $cn$-network at a point $x\in X$} if for each neighborhood $O_x$ of $x$ the set $\bigcup \{ N \in\Nn : x\in N \subseteq O_x \}$ is a neighborhood of $x$; $\Nn$ is a {\em $cn$-network} in $X$ if $\mathcal{N}$ is a $cn$-network at each point $x\in X$.

$\bullet$  a {\em $ck$-network at  a point} $x\in X$ if for any neighborhood $O_x$ of $x$ there is a neighborhood $U_x$ of $x$ such that for each compact subset $K\subseteq U_x$ there exists a finite subfamily $\FF\subseteq\mathcal{N}$ satisfying $x\in \bigcap\FF$ and $K\subseteq\bigcup\FF\subseteq O_x$; $\Nn$ is a {\em $ck$-network} in $X$ if $\mathcal{N}$ is a $ck$-network at each point $x\in X$.

$\bullet$  a {\em $cp$-network at  a point}  $x\in X$ if $\Nn$ is a network at $x$ and for any subset $A\subseteq X$ with $x\in \overline{A}\setminus A$ and each neighborhood $O_x$ of $x$ there is a set $N\in\mathcal{N}$ such that $x\in N\subseteq O_x$ and $N\cap A$ is infinite; $\Nn$ is  a  {\em $cp$-network} in $X$ if $\mathcal{N}$ is a Pytkeev network at each point $x\in X$.
\end{definition}
Therefore  the notion   of the  $cn$-network naturally connects the notions of the network and the base at a point $x\in X$.

Now Definitions \ref{def-Cosmic-Spase}, \ref{def-Sigma-Spase} and \ref{def2} motivated the following concepts.
\begin{definition} \label{def-Pytkeev-Net}
A topological space $X$ is called

$\bullet$ (\cite{Banakh}) a {\em $\Pp_0$-space} if $X$ has a countable Pytkeev network.

$\bullet$ (\cite{GK-GMS1})  a {\em $\Pp$-space} if $X$ has a $\sigma$-locally finite $cp$-network.
\end{definition}
It is known that: $\Pp_0$-space $\Rightarrow$ $\aleph_0$-space $\Rightarrow$ cosmic; but the converse is false (see \cite{Banakh,Mich}). Each $\Pp$-space $X$ has the strong Pytkeev property and is an $\aleph$-space \cite{GK-GMS1}.

Definition \ref{def2} allow us to define the following cardinals of topological spaces.

\begin{definition}[\cite{GK-GMS1}]
Let $x$ be  a point of a topological space $X$ and $\nn\in\{ cp, ck,  cn\}$. The smallest size $|\mathcal{N}|$ of an $\nn$-network $\mathcal{N}$ at $x$ is called the {\em $\nn$-character of $X$ at the point $x$} and is denoted by $\nn_\chi(X,x)$. The cardinal $\nn_\chi(X)=\sup\{ \nn_\chi(X,x): x\in X\}$ is called the {\em $\nn$-character} of  $X$. The {\em \mbox{$\nn$-netweight}}, $\nn w(X)$, of $X$ is the least cardinality of an $\nn$-network for $X$.
\end{definition}

Our first result is a characterization of cosmic, $\aleph_0$- and $\Pp_0$-groups $G$ by their separability and countability of the corresponding types of networks at the unit $e\in G$. Recall (\cite{Mich}) that every cosmic space is (even hereditary) separable. So it is natural to ask: For $\nn\in\{ cp, ck,  cn\}$, does any separable space $X$ of countable $\nn$-character have also a countable $\mathfrak{n}$-network? In general the answer is negative. Indeed, any first countable separable non-metrizable compact space $K$ trivially has countable $cp$-character, but $K$ is not even a $\sigma$-space because it is not metrizable.  However, for the group case we have the following
\begin{theorem} \label{t-Separable}
Let $G$ be a topological group. Then $G$ is a cosmic space (resp. an $\aleph_0$-space or a $\Pp_0$-space) if and only if $G$ is separable and has countable $cn$-character (resp. countable $ck$-character or  countable $cp$-character).
\end{theorem}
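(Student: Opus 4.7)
The plan is to prove all three equivalences via a single template, with the forward implications being easy augmentations of a countable global network and the reverse implications being constructions of a countable global network by translating a countable local one at $e$ along a countable dense subset.

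For the $\Rightarrow$ direction, separability of $G$ in all three cases follows from \cite{Mich}. To promote a countable global network $\mathcal{N}$ (resp.\ $k$-network, Pytkeev network) to a countable $cn$- (resp.\ $ck$-, $cp$-) network at each point $x$, I would take the closure under pairwise unions $\mathcal{N}^\vee=\{N_1\cup N_2: N_1,N_2\in\mathcal{N}\}$, which is still countable. Given $x$ and a neighbourhood $O_x$, the network property supplies $N_1\in\mathcal{N}$ with $x\in N_1\subseteq O_x$, and the global covering/Pytkeev property supplies $N_2\in\mathcal{N}$ with $N_2\subseteq O_x$ serving the witnessing role; the union $N_1\cup N_2$ lies in $\mathcal{N}^\vee$, contains $x$, is contained in $O_x$, and inherits the required property.

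For the $\Leftarrow$ direction fix a countable dense $D\subseteq G$ and a countable $\nn$-network $\mathcal{N}_e$ at $e$. By homogeneity, $d\mathcal{N}_e$ is an $\nn$-network at $d$ for each $d\in D$, so the countable family
\[
\mathcal{M}=\{dN : d\in D,\ N\in\mathcal{N}_e\}
\]
is the natural candidate. For the cosmic case, given $x\in O$ open, pick a symmetric open $V\ni e$ with $xV^2\subseteq O$; apply the $cn$-property at $e$ to produce an open symmetric $W\subseteq V$ with $W\subseteq\bigcup\{N\in\mathcal{N}_e:e\in N\subseteq V\}$, choose $d\in D\cap xW$, and observe that $d^{-1}x\in W$ lies in some $N_0\in\mathcal{N}_e$ with $e\in N_0\subseteq V$; then $x\in dN_0\subseteq xWV\subseteq xV^2\subseteq O$, so $\mathcal{M}$ is a countable network. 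The $\aleph_0$-case is analogous but uses compactness: for each $y\in K\subseteq O$ choose $V_y$ with $yV_y^3\subseteq O$, apply the $ck$-property at $e$ to extract a governing nbhd $U_y\subseteq V_y$, shrink to a symmetric $W_y^2\subseteq U_y$, extract a finite subcover $\{y_iW_{y_i}\}$ of $K$, choose $d_i\in D\cap y_iW_{y_i}$, and apply the $ck$-property at $e$ to the compact sets $d_i^{-1}(K\cap y_iW_{y_i})\subseteq W_{y_i}^2\subseteq U_{y_i}$ to obtain finite subfamilies $\mathcal{F}_i\subseteq\mathcal{N}_e$; then $\bigcup_i\{d_iN:N\in\mathcal{F}_i\}\subseteq\mathcal{M}$ is a finite cover of $K$ inside $O$.

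The main obstacle is the $\Pp_0$-case. Translating, $x^{-1}A$ accumulates at $e$, so the $cp$-property at $e$ applied to $x^{-1}A$ and a symmetric nbhd $V\ni e$ with $xV^2\subseteq U$ produces $N\in\mathcal{N}_e$ with $e\in N\subseteq V$ and $N\cap x^{-1}A$ infinite; hence $xN\subseteq U$ with $xN\cap A$ infinite. The difficulty is that $xN\notin\mathcal{M}$ when $x\notin D$, and naively replacing $x$ by a nearby $d\in D\cap xW$ yields $dN$, which need not meet $A$ in an infinite set because $d^{-1}A$ accumulates at the non-identity element $d^{-1}x\neq e$. I would handle this by working in the countable family $\mathcal{M}^\vee=\{M_1\cup M_2:M_1,M_2\in\mathcal{M}\}$ of pairwise unions, combining a network element $dN_1\in\mathcal{M}$ containing $x$ (from the $cn$-consequence of $cp$ at $e$) with a witnessing element $dN_2\in\mathcal{M}$ meeting $A$ infinitely, obtained by applying the $cp$-property at $e$ to a translated or conjugated copy of $x^{-1}A$ that still accumulates at $e$ and then pulling back through $d$. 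The critical technical step will be to simultaneously choose $d\in D$ sufficiently close to $x$ and sets $N_1,N_2\in\mathcal{N}_e$ so that both translates $dN_1,dN_2$ lie inside $U$ while the infinite intersection with $A$ survives the translation; this requires a careful balancing of the sizes of $V$ and $W$ with the accumulation behaviour on both the left and right translates of $A$, and it is the main obstacle in the proof.
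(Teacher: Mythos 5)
Your forward direction and the cosmic and $\aleph_0$ halves of the converse are essentially sound (in the $k$-network case the sets $K\cap y_iW_{y_i}$ are not compact, so you should first pass to a compact shrinking of the finite cover of $K$; this is routine). The genuine gap is exactly where you locate it: the $\Pp_0$ case of the converse is not proved, and the repair you sketch --- passing to pairwise \emph{unions} $dN_1\cup dN_2$ --- cannot work. A union $d(N_1\cup N_2)$ is still a single left translate anchored at the dense point $d$, so asking that it meet $A$ in an infinite set amounts to asking that some $N\in\mathcal{N}_e$ meet $d^{-1}A$ in an infinite set; but $d^{-1}A$ accumulates at $d^{-1}x\neq e$, and a countable $cp$-network at $e$ gives no information whatsoever about sets accumulating at points other than $e$. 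No balancing of the sizes of $V$ and $W$ removes this displacement, because the displacement is multiplicative, not a matter of enlarging the target set.

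The paper's resolution is to close the countable local network $\DD$ at $e$ under finite group \emph{products} $D\cdot D'$ rather than unions, and to take $\Nn=\{g_nD_m\}$ with $\{g_n\}$ dense. Writing $g=g_r h$ with $h\in D_t\subseteq W_0:=\bigcup\{D\in\DD: e\in D\subseteq W\}$ and $W^3\subseteq U$, the troublesome translate $gD_s=g_rhD_s$ is absorbed into the member $g_r(D_t\cdot D_s)\in\Nn$: it contains $gD_s$, hence meets $A$ infinitely (apply the $cp$-property at $e$ to $g^{-1}A$, which does accumulate at $e$, to get $D_s$ with $D_s\cap g^{-1}A$ infinite); it contains $g=g_rhe$; and it is contained in $g\,h^{-1}D_tD_s\subseteq gW^3\subseteq gU$. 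Multiplicative closure of the local network is the single missing idea; with it the one family $\Nn$ handles all three cases uniformly, and the paper in fact obtains the stronger conclusion that $\Nn$ is a $cn$-, $ck$-, resp.\ $cp$-network at every point, not merely a network, $k$-network, resp.\ Pytkeev network.
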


In \cite{GKL2} we proved that the space of distributions $\mathfrak{D}'(\Omega)$ over an open set $\Omega\subset\mathbb{R}^{n}$ has the strong Pytkeev property. Since $\mathfrak{D}'(\Omega)$ is also separable, Theorem \ref{t-Separable} strengthens this result as follows.
\begin{corollary} \label{c:Distr-Po}
The space of distributions $\mathfrak{D}'(\Omega)$ is a $\Pp_0$-space.
\end{corollary}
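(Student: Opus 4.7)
The plan is to deduce the corollary as a direct application of Theorem \ref{t-Separable} with $\mathfrak{n}=cp$ to the topological group $G=\mathfrak{D}'(\Omega)$, checking in turn that (a) $\mathfrak{D}'(\Omega)$ is a topological group, (b) it is separable, and (c) it has countable $cp$-character.

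For (a), $\mathfrak{D}'(\Omega)$ is a topological vector space, in particular an abelian topological group, so the hypothesis of Theorem \ref{t-Separable} on the group structure is immediate. For (b), the separability of $\mathfrak{D}'(\Omega)$ is a classical fact from distribution theory (for instance, the countable $\mathbb{Q}$-linear combinations of a countable dense subset of $\mathfrak{D}(\Omega)$ are dense in $\mathfrak{D}'(\Omega)$, or one quotes that $\mathfrak{D}'(\Omega)$ is the strong dual of the separable nuclear Fr\'echet-Schwartz-type space $\mathfrak{D}(\Omega)$, which is known to be separable). So the only substantive point left is (c).

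For (c), the key input is the result from \cite{GKL2} quoted in the paragraph just before the corollary, namely that $\mathfrak{D}'(\Omega)$ has the strong Pytkeev property. As recorded in the excerpt, having the strong Pytkeev property is equivalent to possessing a countable Pytkeev network at each point. To move from this to countable $cp$-character in the sense of Definition \ref{def2}, one has only to note the minor adjustment that the sets of a $cp$-network at $x$ are required to contain $x$ and to form a network at $x$: given a countable Pytkeev family $\mathcal{D}$ witnessing the strong Pytkeev property at $x$, the countable family $\mathcal{N}_x:=\{D\cup\{x\}:D\in\mathcal{D}\}\cup\{\{x\}\}$ still witnesses the Pytkeev condition and, because every neighborhood $O_x$ of a non-isolated point meets some set accumulating at $x$, is a network at $x$ as well. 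Hence $cp_\chi(\mathfrak{D}'(\Omega),x)\leq\aleph_0$ for every $x$.

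Combining (a), (b), (c), Theorem \ref{t-Separable} applies and yields that $\mathfrak{D}'(\Omega)$ has a countable $cp$-network, i.e., is a $\Pp_0$-space. There is no real obstacle in this argument beyond the two bookkeeping points described above; the substantive technical work (construction of a countable Pytkeev family at each point of $\mathfrak{D}'(\Omega)$, and separability of $\mathfrak{D}'(\Omega)$) has already been done in \cite{GKL2} and in the classical literature, respectively, and Theorem \ref{t-Separable} is doing all the heavy lifting of upgrading a local countable network to a global one.
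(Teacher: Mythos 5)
Your proposal is correct and follows exactly the paper's route: the paper derives the corollary by citing the strong Pytkeev property of $\mathfrak{D}'(\Omega)$ from \cite{GKL2} together with its separability, and then invoking Theorem \ref{t-Separable} in the $cp$ case. The extra bookkeeping you supply (passing from a countable family witnessing the strong Pytkeev property to a genuine countable $cp$-network at each point) is sound and is exactly what the paper leaves implicit in the sentence ``$X$ has the strong Pytkeev property if and only if $X$ has a countable Pytkeev network at each point.''
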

For other applications of Theorem \ref{t-Separable},  see Corollaries \ref{c:Cosmic-LCS} and \ref{c:Frechet-P0}.

One of the most interesting and essential problems while studying a certain class of topological spaces  is to describe the topology of spaces from this class. Recently we have described in \cite{GKKLP} the topology $\tau$ of any cosmic space (or $\aleph_0$-space and  $\Pp_0$-space) $(X,\tau)$ in term of a ``small base'' $\UU(\tau)$.
Since the class of spaces with a countable $cn$-character (or a countable $ck$- and $cp$-character) provides the most natural {\it local} generalization of cosmic spaces (or $\aleph_0$-spaces or $\Pp_0$-spaces, respectively), it is natural to ask whether such a description of the topology (as mentioned above) can be also obtained for this  wider class. This question is of an independent interest for the class of topological groups $G$ because the topology of $G$ is defined  essentially by the filter of open neighborhoods at the unit $e$ of $G$.
The main result of the paper is to give a positive answer to this question. For this purpose we need the following concepts.

Let $\Omega$ be a set and $I$ be a partially ordered set with an order $\leq$. We say that a family $\{ A_i\}_{i\in I}$ of subsets of $\Omega$ is  {\it $I$-decreasing}  if $A_j \subseteq A_i$  for every $i\leq j$ in $I$.
One of the most important example of partially ordered sets is the product  $\NN^\NN$  endowed with the natural partial order, i.e., $\alpha\leq\beta$ if $\alpha_i \leq\beta_i$ for all $i\in\NN$, where $\alpha=(\alpha_i)_{i\in\NN}$ and $\beta=(\beta_i)_{i\in\NN}$.
For every $\alpha=(\alpha_i)_{i\in\NN} \in\NN^\NN$ and each $ k\in\NN$, set
$I_k(\alpha) := \left\{ \beta\in\NN^\NN : \beta_i = \alpha_i \mbox{ for } i=1,\dots,k\right\}$.
Let $\mathbf{M}\subseteq \NN^\NN$ and $\mathcal{U}=\{ U_\alpha : \alpha\in \mathbf{M}\}$ be an $\mathbf{M}$-decreasing family of subsets of a set $\Omega$. Then we define the countable family $\mathcal{D}_\mathcal{U}$ of subsets of $\Omega$ by
\[
\mathcal{D}_\mathcal{U} :=\{ D_k (\alpha): \; \alpha\in \mathbf{M} ,\ k\in\NN \}, \mbox{ where } D_{k}(\alpha) = \bigcap_{\beta\in I_{k}(\alpha)\cap\mathbf{M}} U_\beta,
\]
and say that $\mathcal{U}$ satisfies the {\it condition $\dd$} if $U_\alpha =\bigcup_{k\in\NN} D_k (\alpha)$ for every $\alpha\in \mathbf{M}$. A similar condition naturally appears and is essentially used in \cite{GaK,GKKLP,GKL,GKL2}.

\begin{definition}[\cite{GKKLP}] \label{def11}
A topological space  $(X,\tau)$ has a {\em small base } if there exists an $\mathbf{M}$-decreasing base of $\tau$ for some $\mathbf{M}\subseteq \NN^\NN$.
\end{definition}

The above notion has been used to describe the topology of cosmic spaces, $\aleph_{0}$-spaces and $\Pp_{0}$-spaces, respectively. The  item (iii) immediately follows from the proof of (i) and (ii) of  Theorem \ref{quasiC} given in \cite{GKKLP}.
\begin{theorem}[\cite{GKKLP}] \label{quasiC}
Let $(X,\tau)$ be a  topological space. Then:
\begin{itemize}
\item[{\rm (i)}] $X$ has a countable network (and is cosmic)  if and only if $X$ has a  small base $\mathcal{U}=\{ U_\alpha : \alpha\in \mathbf{M}\}$ satisfying the condition $\dd$ (and is regular). In that case the family $\mathcal{D}_\mathcal{U}$ is a countable network in $X$.
\item[{\rm (ii)}] $X$ has a countable $k$-network (and is an $\aleph_0$-space)   if and only if $X$ has a  small base $\mathcal{U}=\{ U_\alpha : \alpha\in \mathbf{M}\}$ satisfying the condition $\dd$ such that  the family $\mathcal{D}_\mathcal{U}$ is a countable $k$-network in $X$ (and is regular).
\item[{\rm (iii)}] $X$ has a countable $cp$-network (and is a $\Pp_0$-space)   if and only if $X$ has a  small base $\mathcal{U}=\{ U_\alpha : \alpha\in \mathbf{M}\}$ satisfying the condition $\dd$ such that  the family $\mathcal{D}_\mathcal{U}$ is a countable $cp$-network in $X$ (and is regular).
\end{itemize}
For this three cases we can find a small base $\mathcal{U}$ such that $U_{\alpha}\not= U_\beta$ for $\alpha\not= \beta$ and $\mathcal{U}=\tau$, what means that for every $W\in\tau$ there exists $\alpha\in\mathbf{M}$ such that $W=U_{\alpha}$.
\end{theorem}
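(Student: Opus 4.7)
The plan is to establish each of (i), (ii), (iii) by proving two implications: from a countable network of the required type to the existence of a small base $\mathcal{U}$ satisfying $\dd$, and conversely, with the same countable family $\mathcal{D}_{\mathcal{U}}$ serving as the network witness. Since the easy direction is nearly identical in all three parts, the heart of the argument lies in the forward construction, and part (iii) will drop out as a refinement of the arguments for (i) and (ii).

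For the \emph{reverse direction} (small base with $\dd$ $\Rightarrow$ countable network of the right type), I would first note that $\mathcal{D}_{\mathcal{U}}$ is automatically countable, since each $D_k(\alpha)$ depends only on the initial $k$-tuple $(\alpha_1,\ldots,\alpha_k) \in \NN^k$. For (i): given $x \in V \in \tau$, the base property produces $\alpha \in \mathbf{M}$ with $x \in U_\alpha \subseteq V$, and $\dd$ supplies $k$ with $x \in D_k(\alpha) \subseteq U_\alpha \subseteq V$, so $\mathcal{D}_{\mathcal{U}}$ is a network. For (ii) and (iii) the $k$- or $cp$-property of $\mathcal{D}_{\mathcal{U}}$ must be \emph{built in} at the construction stage, since it cannot be deduced from the base property alone (the sets $D_k(\alpha)$ need not be open).

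For the \emph{forward direction}, I would enumerate the given countable network $\mathcal{N} = \{N_n : n \in \NN\}$ and, using regularity, refine it so that whenever $x \in V$ is open, some pair $N \subseteq N'$ in the refined family witnesses $x \in N$, $\overline{N} \subseteq N' \subseteq V$. For each $\alpha \in \NN^\NN$ I would then define $U_\alpha$ as a union over $k$ of intersections, indexed by the initial $k$-tuple of $\alpha$, of members of the refined network, so that condition $\dd$ is forced by construction. Take $\mathbf{M} = \{\alpha \in \NN^\NN : U_\alpha \in \tau\}$; the family $\{U_\alpha\}_{\alpha \in \mathbf{M}}$ is $\mathbf{M}$-decreasing by the monotonicity of initial-segment intersections, is a base for $\tau$ by the refined network property, and has $\mathcal{D}_{\mathcal{U}}$ essentially equal to $\mathcal{N}$, so the network type is preserved.

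The \emph{main obstacle} is the joint fulfillment of (a) $\{U_\alpha\}_{\alpha \in \mathbf{M}}$ is actually a base for $\tau$; (b) it is $\mathbf{M}$-decreasing; (c) $\mathcal{D}_{\mathcal{U}}$ has exactly the desired network type. The additional clause that $\mathcal{U} = \tau$ with $\alpha \mapsto U_\alpha$ injective requires a further selection on $\NN^\NN$, assigning a distinct $\alpha$ to each open set. Part (iii) introduces no new conceptual difficulty once (i) and (ii) are in hand: since each $D_k(\alpha)$ arises as an intersection of members of the refined network, if $A$ accumulates at $x$ and $x \in V \in \tau$, applying the $cp$-property of $\mathcal{N}$ inside an appropriate $D_k(\alpha) \subseteq V$ yields a member of $\mathcal{D}_{\mathcal{U}}$ meeting $A$ in infinitely many points.
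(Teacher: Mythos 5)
The paper does not actually prove Theorem \ref{quasiC} here (it is quoted from \cite{GKKLP}), but its proof of the local analogue, Theorem \ref{quasiP}, uses exactly the construction you outline: $U_\alpha := \bigcup_{k}\bigcap_{l=1}^{\alpha_k} N_{k-1+l}$, with $\mathbf{M}$ consisting of one suitably chosen $\alpha(W)$ per open set $W$ (so that $U_{\alpha(W)}=W$, which also yields $\mathcal{U}=\tau$ and injectivity), monotonicity and the condition $\dd$ checked by direct computation, and the $k$-/$cp$-properties of $\mathcal{D}_\mathcal{U}$ obtained from the sandwich $\bigcap_{l=1}^{\alpha_k} N_{k-1+l}\subseteq D_k(\alpha)\subseteq U_\alpha$ rather than from literal equality of $\mathcal{D}_\mathcal{U}$ with $\mathcal{N}$; your plan is essentially this argument. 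One small correction: the regularity refinement ($\overline{N}\subseteq N'$) is neither needed nor available in general --- the unparenthesized equivalences are asserted for arbitrary (Hausdorff) spaces, and the plain network property already gives $\bigcup\{N\in\mathcal{N}: N\subseteq W\}=W$ for every open $W$, which is all the forward construction requires.
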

Note that the condition $\dd$ is essential  in Theorem \ref{quasiC}: the Bohr compactification $b\mathbb{Z}$ of the discrete group $\mathbb{Z}$ of integers has a small base  \cite{GKKLP}, but the compact group $b\mathbb{Z}$ is not cosmic since it is not metrizable.

The following local version of  the concept from Definition \ref{def11}  will play an essential role in this paper.
\begin{definition} \label{def13}
Let $x$ be a point in  a topological space $(X,\tau)$. We say that $X$ has a {\em small base at $x$} if there exists an $\MM_x$-decreasing base at $x$ for some  $\mathbf{M}_x \subseteq \NN^\NN$.
\end{definition}
If a topological space $X$ has an $\MM_x$-decreasing base at $x$, we shall also say that the space $X$ has a (local) {\it $\MM_x$-base} at $x$. Clearly, if $G$ is a topological group with an $\MM_e$-base at the unit $e$, then $g\MM_e$ is a small base at each point $g\in G$. So we shall say simply that the group $G$ has a (local) {\it $\MM$-base} omitting the subscript $e$.  A number of  specialists in their papers dealing with  locally convex spaces ({\it lcs} for short) used the notation of {\it $\mathfrak{G}$-base} for a local $\NN^\NN$-base at zero (see, for example, \cite{CO,kak}). In this special case (i.e., when $\MM_x =\NN^\NN$) we shall follow the same notation. Topological groups with a $\GG$-base are thoroughly studied in  \cite{GKL} (see also \cite{GaK,GKL2}). Note that any metrizable group $G$ has a $\GG$-base.

Below we describe the topology of a topological space $X$ at a point $x$ in which it has countable $cn$-, $ck$- or $cp$-character.
\begin{theorem}\label{quasiP}
Let $x$ be a point of a  topological space $X$. Then:
\begin{itemize}
\item[{\rm (i)}] $X$ has a countable $cn$-network at $x$  if and only if $X$ has a  small base $\mathcal{U}(x)=\{ U_\alpha : \alpha\in \mathbf{M}_x\}$ at $x$ satisfying the condition $\dd$. In that case the family $\mathcal{D}_{\mathcal{U}(x)}$ is a countable $cn$-network  at $x$.
\item[{\rm (ii)}] $X$ has a countable $ck$-network at $x$   if and only if $X$ has a  small base $\mathcal{U}(x)=\{ U_\alpha : \alpha\in \mathbf{M}_x \}$  at $x$ satisfying the condition $\dd$ such that  the family $\mathcal{D}_{\mathcal{U}(x)}$ is a countable $ck$-network  at $x$.
\item[{\rm (iii)}] $X$ has a countable $cp$-network  at $x$  if and only if $X$ has a  small base $\mathcal{U}(x)=\{ U_\alpha : \alpha\in \mathbf{M}_x\}$ at $x$ satisfying the condition $\dd$ such that  the family $\mathcal{D}_{\mathcal{U}(x)}$ is a countable $cp$-network  at $x$.
\end{itemize}
\end{theorem}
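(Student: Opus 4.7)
The $(\Leftarrow)$ direction is immediate in all three cases: the family $\mathcal{D}_{\mathcal{U}(x)}=\{D_k(\alpha):\alpha\in\mathbf{M}_x,\,k\in\NN\}$ is countable, because $D_k(\alpha)$ depends only on the initial segment $(\alpha_1,\dots,\alpha_k)\in\NN^{<\omega}$; by hypothesis it is a $cn$- (respectively $ck$-, $cp$-) network at $x$, and so witnesses countable $\nn$-character at $x$ for the appropriate $\nn$.

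For the $(\Rightarrow)$ direction I would localize the construction of Theorem~\ref{quasiC} to the neighborhood filter at $x$. Given a countable $cn$- (resp.\ $ck$-, $cp$-) network $\mathcal{N}=\{N_n:n\in\NN\}$ at $x$, I would first enlarge it to the countable family $\mathcal{N}^{*}$ of all finite unions $N_{n_1}\cup\cdots\cup N_{n_j}$ containing $x$; this enlargement preserves the network type at $x$. The plan is then to build a subset $\mathbf{M}_x\subseteq\NN^\NN$ and an $\mathbf{M}_x$-decreasing family $\mathcal{U}(x)=\{U_\alpha:\alpha\in\mathbf{M}_x\}$ of neighborhoods of $x$ that (i) is a base at $x$, (ii) satisfies the condition $\dd$, and (iii) has derived family $\mathcal{D}_{\mathcal{U}(x)}$ contained in (or canonically identified with) $\mathcal{N}^{*}$, so that the $\nn$-network property of $\mathcal{N}^{*}$ transfers verbatim to $\mathcal{D}_{\mathcal{U}(x)}$.

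The construction assigns to each finite sequence $s\in\NN^{<\omega}$ a set $D_s\in\mathcal{N}^{*}$ via a ``tree recipe'' that encodes into the coordinates of sequences in $\NN^\NN$ a systematic enumeration of the countable data (pairs of neighborhoods of $x$ with witnesses drawn from $\mathcal{N}^{*}$). One defines $U_\alpha=\bigcup_{k\in\NN}D_{\alpha|_k}$ and lets $\mathbf{M}_x=\{\alpha\in\NN^\NN:U_\alpha\text{ is a neighborhood of }x\}$. Provided the recipe is chosen antitone under the coordinatewise order on sequences of fixed length and compatible with extensions ($D_{s|_{k-1}}\subseteq D_s$), monotonicity $\alpha\leq\beta\Rightarrow U_\beta\subseteq U_\alpha$ is automatic, and the tree structure forces $D_k(\alpha)=\bigcap_{\beta\in I_k(\alpha)\cap\mathbf{M}_x}U_\beta$ to reduce to $D_{\alpha|_k}$, yielding condition $\dd$. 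The base-at-$x$ property follows by a diagonal argument that uses the $cn$-network condition of $\mathcal{N}^{*}$ to choose $\alpha_k$ inductively so that $D_{\alpha|_k}$ stays inside any prescribed open neighborhood $O\ni x$.

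The main obstacle is the simultaneous bookkeeping required so that every witness needed for the $\nn$-network property at $x$ appears among the $D_s$'s while both antitonicity and the base property are preserved. The $cn$-case is essentially automatic after the $\cup$-closure of $\mathcal{N}$; the $ck$-case requires the tree to code finite subfamilies covering each compact $K\subseteq U\ni x$; and the $cp$-case is the most intricate, since for every set $A$ accumulating at $x$ and every open $O\ni x$ some $D_s$ must satisfy $x\in D_s\subseteq O$ and $|D_s\cap A|=\infty$, forcing a careful interleaving of the enumeration of such pairs with the tree. Apart from this bookkeeping, the argument runs in close parallel to the proof of Theorem~\ref{quasiC} already carried out in \cite{GKKLP}, applied now to the filter of neighborhoods of the single point $x$ rather than to the whole topology $\tau$.
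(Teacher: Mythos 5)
Your $(\Leftarrow)$ direction and your overall strategy --- localize the construction of Theorem~\ref{quasiC}, close the given network under finite unions, and arrange matters so that the derived family $\mathcal{D}_{\mathcal{U}(x)}$ essentially absorbs the original network, whence the witness properties transfer --- are correct and do match the paper in spirit. But the proposal stops where the proof has to start: the ``tree recipe'' $s\mapsto D_s$ is never exhibited, and you yourself flag the ``simultaneous bookkeeping'' as the main obstacle without resolving it. That recipe, together with the verification that the resulting $\{U_\alpha\}$ is a base at $x$ and satisfies $\dd$, is the entire content of the forward direction, so as written there is a genuine gap. For comparison, the paper's recipe is concrete and needs no bookkeeping: with $\mathcal{D}=\{D_i\}_{i\in\NN}$ the given network at $x$, it puts $D^i_{\alpha_i}:=\bigcap_{l=1}^{\alpha_i}D_{i-1+l}$ (the set attached to coordinate $i$ depends only on $i$ and $\alpha_i$, is antitone in $\alpha_i$, and can be driven inside any prescribed $D_j$ with $j\geq i$ by taking $\alpha_i$ large), sets $U_\alpha:=\bigcup_i D^i_{\alpha_i}$, and takes $\MM_x$ to consist of one sequence $\alpha(W)$ per neighborhood $W$ of $x$, chosen so that $U_{\alpha(W)}=\bigcup_{j\in J(W)}D_j$ with $J(W)=\{j:D_j\subseteq W\}$; the $cn$-property of $\mathcal{D}$ makes this set a neighborhood of $x$ (base property), and the inclusion $D_k(\alpha)\supseteq D^k_{\alpha_k}$ gives $\dd$.

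Two specific points in your sketch would need repair if you pursued it. First, for the $ck$- and $cp$-cases you propose to code into the tree ``finite subfamilies covering each compact $K\subseteq U\ni x$'' and to interleave ``the enumeration of such pairs'' $(A,O)$ with the construction; these collections of pairs are uncountable, so no countable tree can enumerate them, and this route as described cannot be completed. The correct move --- which your earlier remark about identifying $\mathcal{D}_{\mathcal{U}(x)}$ with $\Nn^{*}$ points at, and which the paper carries out --- is to prove that for $\alpha=\alpha(O_x)$ and $n_k\in J(O_x)$ one has $D_{n_k}\subseteq D_{n_k}(\alpha)\subseteq O_x$, so that every witness already supplied by the original countable network (closed under finite unions) is inherited by $\mathcal{D}_{\mathcal{U}(x)}$; no enumeration of compacta or accumulating sets is ever needed. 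Second, condition $\dd$ does not require $D_k(\alpha)$ to ``reduce to'' $D_{\alpha|_k}$: only the inclusion $D_k(\alpha)\supseteq D_{\alpha|_k}$ is available (the reverse need not hold, since $D_k(\alpha)$ is an intersection of the full unions $U_\beta$), and only it is needed, because $D_k(\alpha)\subseteq U_\alpha$ is automatic from $\alpha\in I_k(\alpha)\cap\MM_x$.
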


The main application of the above theorem is the next theorem which  provides a  characterization of metrizability for topological groups with the Baire property; this  also gives a positive answer to Question 10 of \cite{GKL2} and a partial positive answers to Question 4.2 of \cite{GKL} and Question 9 of \cite{GKL2}.
\begin{theorem} \label{tMetr-cn}
Let $G$ be a Baire topological group. Then the following are equivalent:
\begin{itemize}
\item[{\rm (i)}] $G$ is metrizable.
\item[{\rm (ii)}] $G$ has the strong Pytkeev property.
\item[{\rm (iii)}]  $G$ has countable $ck$-character.
\item[{\rm (iv)}]  $G$ has countable $cn$-character.
\item[{\rm (v)}] $G$ has a $\GG$-base satisfying the condition $\dd$.
\end{itemize}
\end{theorem}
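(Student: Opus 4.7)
The plan is to set up the equivalences around the backbone (i)$\Rightarrow$(v)$\Rightarrow$(iv)$\Rightarrow$(i), inserting (ii) and (iii) via straightforward implications between the network classes of Definition \ref{def2}. For (i)$\Rightarrow$(v), if $G$ is metrizable I would fix a decreasing countable neighbourhood base $\{V_n\}_{n\in\NN}$ at $e$ and set $U_\alpha:=V_{\alpha_1}$ for $\alpha=(\alpha_i)\in\NN^\NN$; this gives an $\NN^\NN$-decreasing base at $e$, and because every $\beta\in I_k(\alpha)$ with $k\ge 1$ shares the first coordinate $\alpha_1$ with $\alpha$, the identity $D_k(\alpha)=V_{\alpha_1}=U_\alpha$ holds and condition $\dd$ is trivial. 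The step (v)$\Rightarrow$(iv) follows at once from Theorem \ref{quasiP}(i): the $cn$-network $\DD_{\UU(e)}$ is countable because each $D_k(\alpha)$ depends only on $(\alpha_1,\dots,\alpha_k)$, and left translations carry countable $cn$-character from $e$ to every point of $G$. In parallel, I would deduce (i)$\Rightarrow$(ii), (iii) from the trivial remark that a countable neighbourhood base at $e$ is simultaneously a $cp$- and a $ck$-network at $e$; and (ii)$\Rightarrow$(iv), (iii)$\Rightarrow$(iv) from the elementary Hausdorff observation that any $cp$- or $ck$-network at $x$ is also a $cn$-network at $x$ (for $ck$, use doubletons $\{x,y\}$ as test compacta; for $cp$, apply the Pytkeev condition to the set $A=X\setminus\bigcup\{N\in\mathcal{N}:x\in N\subseteq O_x\}$).

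The main content of the theorem is (iv)$\Rightarrow$(i), where the Baire hypothesis enters. Applying Theorem \ref{quasiP}(i) at $e$, I would take a countable $cn$-network $\DD=\{D_n:n\in\NN\}$ at $e$ with $e\in D_n$ for all $n$. The aim is to show that the countable family $\{T_n\}_{n\in\NN}$ with $T_n:=\overline{D_n}^{-1}\cdot\overline{D_n}$ forms a local base at $e$; this will give first countability of $G$ at $e$, hence at every point, and metrizability of $G$ by Birkhoff--Kakutani. Given a neighbourhood $W$ of $e$, I would choose by continuity of the group operations a neighbourhood $W'$ of $e$ with $\overline{W'}^{-1}\cdot\overline{W'}\subseteq W$. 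The $cn$-property then yields that $V(W'):=\bigcup\{D_n:D_n\subseteq W'\}$ is a neighbourhood of $e$, so its non-empty open interior $U$ is a Baire subspace of $G$. Writing $U=\bigcup\{U\cap D_n:D_n\subseteq W'\}$ as a countable union, Baire's theorem in $U$ forces some $U\cap D_n$ to fail to be nowhere dense, so $\overline{D_n}$ contains a non-empty open set $O\subseteq G$. Choosing $v\in O$, the Pettis-type inclusion $v^{-1}O\subseteq \overline{D_n}^{-1}\cdot\overline{D_n}=T_n$ certifies that $T_n$ is a neighbourhood of $e$, while the containment $T_n\subseteq \overline{W'}^{-1}\cdot\overline{W'}\subseteq W$ completes the basing property.

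The hard part will be precisely this coupling of the qualitative $cn$-network condition with Baire category: a countable $cn$-network alone does not deliver a countable local base, but the key observation is that the $cn$-neighbourhood $V(W')$ is presented as a \emph{countable} union of members of $\DD$, which lets Baire's theorem single out a ``fat'' $D_n$ whose closure has non-empty interior. Once that step is in hand, the classical Pettis trick yields the neighbourhood $T_n$ almost for free, and all the other implications are direct applications of Theorem \ref{quasiP}, Definition \ref{def2}, and the Birkhoff--Kakutani theorem.
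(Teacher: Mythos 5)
Your proof is correct, and the decisive step --- coupling the countable decomposition of a neighbourhood of $e$ with the Baire property and then passing to $\overline{D}^{-1}\cdot\overline{D}$ --- is exactly the engine of the paper's proof of (iv)$\Rightarrow$(i). The organization differs in two respects. First, the paper routes (iv)$\Rightarrow$(i) through Theorem \ref{quasiP}(i): it produces a small base $\{U_\alpha\}$ at $e$ satisfying $\dd$, picks $\alpha$ with $U_\alpha=\bigcup_k D_k(\alpha)\subseteq V$, and applies Baire to find $k$ with $\mathrm{Int}(U_\alpha)\cap\overline{D_k(\alpha)}$ having nonempty interior, concluding that $\overline{D_k(\alpha)}\cdot\overline{D_k(\alpha)}^{-1}$ is a neighbourhood of $e$ inside $W$. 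You instead apply Baire directly to the countable union $\bigcup\{D_n: e\in D_n\subseteq W'\}$ supplied by the $cn$-network itself; this is a genuine (if modest) simplification showing the small-base machinery is not needed for this implication, at the price of not illustrating the condition $\dd$ that the theorem's item (v) is built around. Second, for the remaining implications the paper uses Proposition \ref{fB} ($cp\Rightarrow ck$ for countable networks) to get (ii)$\Rightarrow$(iii), whereas you prove (ii)$\Rightarrow$(iv) and (iii)$\Rightarrow$(iv) directly by elementary arguments (the complement trick for $cp$, doubletons for $ck$); both schemes close the cycle of equivalences, and your versions of these reductions are sound. One cosmetic point: invoking Theorem \ref{quasiP}(i) merely to ``take a countable $cn$-network at $e$'' is redundant, since that is literally hypothesis (iv); you may simply discard the members of the network not containing $e$ and proceed.
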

Since a regular topological space is cosmic if and only if it has a countable $cn$-network,  we have  the following
\begin{corollary}[\cite{GKKLP}] \label{cosmic}
A Baire separable topological group $G$ is metrizable if and only if $G$ is cosmic.
\end{corollary}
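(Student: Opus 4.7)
The corollary is an almost immediate consequence of Theorem \ref{tMetr-cn}, combined with the equivalence between ``cosmic'' and ``having a countable $cn$-network'' for regular spaces, which the authors invoke in the sentence just preceding the statement. The plan is to translate the cosmic hypothesis into a bound on the $cn$-character and then apply Theorem \ref{tMetr-cn}.

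For the easy direction, assume $G$ is metrizable. Since $G$ is also separable, it admits a countable base $\BB$, which is in particular a countable network; as every topological group is regular, $G$ is cosmic by definition. Note that the separability hypothesis is used only at this step.

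For the nontrivial direction, assume $G$ is cosmic. Since $G$ is regular, the preliminary fact just quoted provides a countable $cn$-network $\Nn$ in $G$. By definition $\Nn$ is already a $cn$-network at each individual point $x \in G$, so $|\Nn|\le\aleph_0$ bounds the $cn$-character of $G$ by $\aleph_0$. Since $G$ is Baire, the implication (iv)$\Rightarrow$(i) of Theorem \ref{tMetr-cn} then yields that $G$ is metrizable.

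The real content is hidden inside Theorem \ref{tMetr-cn}, whose proof proceeds through the local small-base description of Theorem \ref{quasiP} and a Baire category argument needed to upgrade an $\MM$-base satisfying the condition $\dd$ into a genuine $\GG$-base and ultimately a countable base at the unit. Once that machinery is in place, the corollary is only a reformulation at the network level, and the main subtlety for the reader is simply to observe that a global countable $cn$-network automatically bounds the pointwise $cn$-character by the same cardinal.
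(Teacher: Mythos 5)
Your proposal is correct and follows exactly the route the paper intends: the preceding sentence's equivalence ``cosmic $\Leftrightarrow$ countable $cn$-network for regular spaces'' reduces the nontrivial direction to the implication (iv)$\Rightarrow$(i) of Theorem \ref{tMetr-cn}, while the converse is the trivial observation that a separable metrizable group is second countable and hence cosmic. No further comment is needed.
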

It turns out that even Fr\'echet-Urysohn lcs which are Baire need not to have a countable $cn$-character, see Remark \ref{rema} below.

Section \ref{secTVS} contains some applications of presented results  for the class of topological vector spaces and  for the  free (abelian) topological groups $F(X)$ ($A(X)$, respectively), as well as  for the free locally convex space $L(X)$ over a Tychonoff space $X$.
This section deals  with a natural question:  For which topological spaces $X$ the free groups $A(X)$, $F(X)$ and $L(X)$ have a \mbox{$\mathfrak{G}$-base}, countable $\nn$-character or are $\aleph_0$-spaces?  As usual, $\chi(X)$ denotes the character of a topological space $X$. Recall also that a topological space $X$ is called an {\it $\mathcal{MK}_\omega$-space} if the topology of $X$ is defined by an increasing sequence of compact metrizable subsets. Denote by $\mathfrak{d}$ the cofinality of the partially ordered set $\NN^\NN$. The next theorem gives a partial answer to the aforementioned question and provides an alternative and simple proof of the  equality $\chi(A(X))=\chi(F(X))=\mathfrak{d}$ for a non-discrete $\mathcal{MK}_\omega$-space $X$ (which  is one of the principal results of \cite{NTk}). Also this theorem generalizes Theorem 4.16 of  \cite{GKL} and gives an affirmative answer to Question 4.17 of  \cite{GKL}.
\begin{theorem} \label{tFree}
Let $X$ be an $\mathcal{MK}_\omega$-space. Then:
\begin{enumerate}
\item[{\rm (i)}] $A(X)$, $F(X)$ and $L(X)$ have a $\mathfrak{G}$-base satisfying the condition $\dd$ and are $\Pp_0$-spaces.
\item[{\rm (ii)}]
  \begin{enumerate}
  \item[{\rm (a)}]  If $X$ is not discrete, then $\chi(A(X))=\chi(F(X))=\chi(L(X))= \mathfrak{d}$.
  \item[{\rm (b)}]  If $X$ is discrete, then $\chi(A(X))=\chi(F(X))=1$, and $\chi(L(X))=\aleph_0$ if $X$ is finite and $\chi(L(X))=\mathfrak{d}$ if $X$ is infinite.
  \end{enumerate}
\end{enumerate}
\end{theorem}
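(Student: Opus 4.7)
The plan is to deduce everything from the observation that if $X$ is an $\mathcal{MK}_\omega$-space, then the free objects $A(X)$, $F(X)$, $L(X)$ are themselves $\mathcal{MK}_\omega$-spaces. Writing $X=\bigcup_n K_n$ with $K_n$ compact metrizable and the inductive topology, the standard decomposition by word length gives $F(X)=\bigcup_{n,m}F_n(K_m)$, where $F_n(K_m)$ is the set of reduced words of length $\le n$ in letters from $K_m\cup K_m^{-1}$. Each $F_n(K_m)$ is compact metrizable (this is classical for $F_n$ of a compact metric space), and the topology of $F(X)$ coincides with the inductive topology from this countable increasing family. The analogous decomposition works for $A(X)$, and for $L(X)$ one simultaneously bounds the word length, the compact set $K_m$, and the size of the real coefficients.

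Having shown that each of $A(X), F(X), L(X)$ is $\mathcal{MK}_\omega$, I would then prove the easy general fact that any $\mathcal{MK}_\omega$-space $Y=\bigcup_n L_n$ (with $L_n$ compact metric) is a $\Pp_0$-space: take a countable base $\mathcal{B}_n$ of $L_n$; then $\bigcup_n\mathcal{B}_n$ is a countable $cp$-network for $Y$, because the $k_\omega$-property guarantees that any set accumulating at a point $x\in Y$ already accumulates at $x$ inside some $L_n$, and then the Pytkeev property of compact metric spaces gives the required $N$. Theorem \ref{quasiC}(iii) now produces a small base satisfying $\dd$ on the whole space. To lift this to a $\mathfrak{G}$-base at the identity $e$, fix a countable decreasing base $\{V_n^k:k\in\NN\}$ of neighbourhoods of $e$ in each metrizable compact piece at level $n$, and for $\alpha=(\alpha_n)\in\NN^\NN$ let $U_\alpha$ be the neighbourhood of $e$ obtained by the Graev-type construction from the sequence $(V_n^{\alpha_n})_n$; the family $\{U_\alpha:\alpha\in\NN^\NN\}$ is $\NN^\NN$-decreasing, is a base at $e$ by the $k_\omega$-property, and inherits the condition $\dd$ from the coordinate-wise nature of the construction (with $D_k(\alpha)$ obtained by intersecting over $\beta$ that agree with $\alpha$ on the first $k$ coordinates).

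For part~(ii), the upper bound $\chi\le\mathfrak{d}$ is immediate from the $\mathfrak{G}$-base of part~(i), since any cofinal subfamily of $\NN^\NN$ has cardinality $\mathfrak{d}$ and thinning the base along such a cofinal subset still gives a base. When $X$ is not discrete, $X$ contains a non-trivial convergent sequence, and I would use a standard diagonal argument: given any base $\{W_\gamma:\gamma\in\Gamma\}$ of cardinality $<\mathfrak{d}$ at $e$, choose for each $\gamma$ an index $\psi(\gamma)\in\NN^\NN$ recording how far $W_\gamma$ reaches along the canonical generators coming from the convergent sequence; the set $\{\psi(\gamma):\gamma\in\Gamma\}$ is not dominating, so some $\alpha\in\NN^\NN$ escapes it, and the associated neighbourhood $U_\alpha$ contains no $W_\gamma$, a contradiction. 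In the discrete case, $A(X)$ and $F(X)$ are discrete groups so $\chi=1$; $L(X)$ with $|X|$ finite is $\RR^{|X|}$, hence metrizable with $\chi=\aleph_0$; and $L(X)$ with $X$ discrete infinite is the locally convex direct sum $\bigoplus_{X}\RR$, whose character at $0$ is well known to equal $\mathfrak{d}$ (the upper bound from the $\mathfrak{G}$-base, the lower bound by the same diagonal argument applied to the canonical basis).

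The main obstacle will be the verification that the parametrised family $\{U_\alpha\}$ above is genuinely a base at $e$ of the free object, together with condition $\dd$; this rests on a concrete Graev-type description of open neighbourhoods of the identity in $F(X)$, $A(X)$, $L(X)$ built out of neighbourhoods of the identity in each level $F_n(K_n)$, $A_n(K_n)$, $L_n(K_n)$, combined with the $k_\omega$-property to ensure that every open neighbourhood of $e$ absorbs some $U_\alpha$. The other delicate point is the lower bound $\chi\ge\mathfrak{d}$ in (ii)(a), where non-discreteness of $X$ must be converted into a concrete dominating-family obstruction; the convergent-sequence subspace embedded in $X$ is the natural witness.
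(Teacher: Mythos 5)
Your strategy breaks down at its very first step for $L(X)$: the free locally convex space over a non-discrete $\mathcal{MK}_\omega$-space is \emph{not} an $\mathcal{MK}_\omega$-space, and in fact is not even a $k$-space. Take $X=[0,1]$. The sets obtained by simultaneously bounding the support length, the compact piece and the coefficients are indeed compact and cover $L(X)$, but they do not determine its topology: since $X$ is a $k$-space, $L(X)$ carries the topology induced from $C_c(C_c(X))=C(X)'_c$, and there the finitely supported measures form a proper dense --- hence non-closed, hence incomplete --- subgroup of the complete group $C(X)'_c$, whereas every $k_\omega$ topological group is Ra\u{\i}kov complete. This is precisely why the paper routes $L(X)$ through the chain (\ref{emb}): $C_c(X)$ is Polish because $X$ is $\mathcal{MK}_\omega$, so $C_c(C_c(X))$ has a $\GG$-base satisfying $\dd$ and is a $\Pp_0$-space (Theorems 2 and 9 of \cite{GKL2} together with Theorem \ref{c-Banakh}), and both properties pass to the subspace $L(X)$; the groups $A(X)$ and $F(X)$, which \emph{are} $\mathcal{MK}_\omega$, are then handled via Sakai's embedding \cite{Sakai-K} into $\phi\times Q$.

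Two further steps are defective even where the $\mathcal{MK}_\omega$ claim is correct. First, your proof of the ``easy general fact'' rests on the assertion that a set accumulating at $x$ in a $k_\omega$-space already accumulates at $x$ inside some compact piece $L_n$; since the $L_n$ are metrizable, this would force every $\mathcal{MK}_\omega$-space to be Fr\'echet--Urysohn, contradicting the non-Fr\'echet--Urysohnness of $A(X)$ and $F(X)$ for non-discrete $X$ that you yourself invoke in part (ii). (The conclusion is salvageable --- one can only locate an accumulation point of $A\cap U\cap L_m$ somewhere in $U\cap L_m$, not at $x$ itself --- but the lemma as stated is false.) Second, the existence of a $\GG$-base satisfying $\dd$, which is the real content of part (i), is deferred as ``the main obstacle'' and never established; note also that Theorem \ref{quasiC}(iii) only yields an $\MM$-decreasing base for some $\MM\subseteq\NN^\NN$, not a base indexed by all of $\NN^\NN$. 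The paper obtains the $\GG$-base with $\dd$ at no extra cost, since this property is inherited by subspaces of $C_c(C_c(X))$ and of $\phi\times Q$. Your part (ii) follows essentially the paper's lines (which cites Proposition 2.4 and Corollary 3.14 of \cite{GKL} for the two bounds), but it is downstream of part (i) and so cannot stand on its own.
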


At the end of Section \ref{secTVS} we mostly deal with topological vector spaces ({\em tvs} for short) having a fundamental sequence of bounded sets. In particular,  we  extend Theorem 5.1 in \cite{kaksax}  from the class of locally convex spaces to the class of tvs.
\begin{theorem} \label{t-Metr-TVS-FU}
A tvs $E$ is  $p$-normed for some $0<p\leq 1$ if and only if $E$ is Fr\'echet-Urysohn and admits a  fundamental sequence of bounded sets.
\end{theorem}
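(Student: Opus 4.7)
The plan is to split this into an easy necessity and a substantive sufficiency, with the Aoki--Rolewicz theorem doing the final packaging. For necessity, if $(E,\|\cdot\|_p)$ is $p$-normed, then the translation-invariant metric $d(x,y)=\|x-y\|_p$ shows that $E$ is metrizable and hence Fr\'echet--Urysohn; the closed unit $p$-ball $B$ is bounded, and since every bounded set in $E$ is absorbed by $B$, the dilates $(nB)_{n\in\NN}$ form a fundamental sequence of bounded sets.

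For sufficiency I would aim to prove only that $E$ is \emph{locally bounded}, because the classical Aoki--Rolewicz theorem then yields at once that a Hausdorff tvs with a bounded neighbourhood of zero is $p$-normable for some $0<p\le 1$. So assume $E$ is Fr\'echet--Urysohn and admits a fundamental sequence $(B_n)_{n\in\NN}$ of bounded sets; we may suppose each $B_n$ is balanced and $B_n\subseteq B_{n+1}$.

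To get local boundedness I would argue by contradiction: if no $B_n$ is a neighbourhood of $0$, then $0\in\overline{E\setminus B_n}$ for every $n$, so the decreasing family $A_n:=E\setminus B_n$ satisfies $0\in\bigcap_n\overline{A_n}$. Here I would invoke the classical fact (due to Nyikos) that every Fr\'echet--Urysohn topological group is strongly Fr\'echet--Urysohn in the sense of Siwiec, which yields $x_n\in A_n$ with $x_n\to 0$. Since $(x_n)$ converges to $0$ it is bounded, so by fundamentality $\{x_n:n\in\NN\}\subseteq B_N$ for some $N$; but for $n\ge N$ we then have $x_n\in B_N\subseteq B_n$, contradicting $x_n\notin B_n$. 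Hence some $B_{n_0}$ is a bounded $0$-neighbourhood, and Aoki--Rolewicz finishes the argument.

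The main obstacle is this diagonal extraction step: in a general Fr\'echet--Urysohn space one cannot simultaneously pick $x_n$ from a shrinking family $A_n$ so that $x_n\to 0$, so the proof depends crucially on upgrading Fr\'echet--Urysohn-ness to the strongly Fr\'echet--Urysohn property via the group structure of $E$. Once the diagonal sequence is available, both the contradiction with the fundamental bounded family and the passage from local boundedness to a $p$-norm through Aoki--Rolewicz are completely standard; in particular no convexity is needed, which is exactly what allows the argument to extend Theorem~5.1 of \cite{kaksax} from the locally convex setting to arbitrary tvs.
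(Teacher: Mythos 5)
Your proof is correct, but it reaches local boundedness by a genuinely different route than the paper. The paper first proves that $E$ is \emph{metrizable}: it combines Proposition \ref{p-C4} (every Fr\'echet--Urysohn group has the property $(PS)$), Proposition \ref{t-C4} (a tvs with $(PS)$ and a fundamental sequence of bounded sets has countable $cs^\ast$-character) and the Banakh--Zdomskyy metrization theorem \cite[Theorem 3]{BZ}; only then, using a countable decreasing base $(U_n)$ of balanced neighbourhoods of zero, does it manufacture a bounded set of the form $\{n^{-1}x_n\}$ with $n^{-1}x_n\notin B_n$, contradicting fundamentality. You bypass metrizability entirely: you upgrade the Fr\'echet--Urysohn property to the strongly Fr\'echet (countably bisequential) property and extract a diagonal sequence $x_n\in E\setminus B_n$ with $x_n\to 0$ directly, which is shorter and avoids the external metrization theorem. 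One small point of care: Nyikos's Theorem 4 in \cite{nyikos} gives only $(\alpha_4)$; the passage from Fr\'echet--Urysohn plus $(\alpha_4)$ to the strongly Fr\'echet property (choosing convergent sequences in each $A_m$ and diagonalizing) is standard but should be stated as a separate step. Both arguments ultimately rest on the same diagonalization phenomenon for Fr\'echet--Urysohn groups --- the paper through the double-sequence property $(AS)$ of \cite{ChMPT}, you through $(\alpha_4)$ --- and both finish with the Aoki--Rolewicz fact that a locally bounded Hausdorff tvs is $p$-normable \cite[Theorem 6.8.3]{jarchow}. What the paper's longer route buys is that its intermediate statements are reused elsewhere and fit the paper's $cs^\ast$-character theme; what yours buys is a more self-contained and elementary proof of sufficiency.
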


\section{Proofs of Theorems \ref{t-Separable}, \ref{quasiP} and \ref{tMetr-cn} } \label{sec-Top}

We start this section with the proof of Theorem \ref{t-Separable}.
\begin{proof}[Proof of Theorem \ref{t-Separable}]
We assume that $G$ is not discrete since, otherwise, the theorem is trivial.

Necessity is clear. Let us prove sufficiency. Let $\DD =\{ D_n \}_{n\in\NN}$ be a countable $\nn$-network at the unit $e$ of $G$ and let $\{ g_n\}_{n\in\NN}$ be a dense subset of $G$. Without loss of generality we can assume that $\DD$ is closed under taking finite products. We show that the countable family
\[
\Nn := \{ g_n D_m :\ n, m \in\NN \}
\]
is an $\nn$-network in $G$.

Fix $g\in G$ and let $gU$ be an open neighborhood of $g$. Take an open symmetric neighborhood $W$ of $e$ such that $W^3 \subseteq U$. In all three cases $\DD$ is also a $cn$-network at $e$. Hence the set $$W_0 := \bigcup \{ D\in \DD : e\in D\subseteq W\}$$ is a neighborhood of $e$. As $G=\cup_n g_n W_0$, we can find $r,t\in\NN$ such that $g=g_r \cdot h$ and $h\in D_t \subseteq W_0$.

(1) Assume that $\DD$ is a $cn$-network at $e$. Clearly,
\[
\bigcup \{ g_r D_t\cdot D_m : \ D_m \in\DD, D_m \subseteq W \} =g_r D_t\cdot W_0 \subseteq g_r W_0^2 \subseteq g W^3 \subseteq gU,
\]
and $g=g_r h \in \bigcap \{ g_r D_t\cdot D_m : \ D_m \in\DD, D_m \subseteq W \}$. So $\Nn$ is a $cn$-network at $g$.

(2) Assume that $\DD$ is a $ck$-network at $e$. Take an open neighborhood $W_1 \subseteq W$ of $e$ such that for every compact subset $K$ of $W_1$ there exists a finite subfamily $\FF$ of $\DD$ satisfying $e\in \bigcap\FF$ and $K\subseteq \bigcup \FF \subseteq W.$ As $G=\cup_n g_n W_1$, we can take $a,b\in\NN$ such that $g=g_a \cdot h$ and $h\in D_b \subseteq W$. Now, for each compact subset $gK$ of $gW_1$ we have
\[
g_aD_b\cdot\FF \subseteq\Nn,\,\,\, g\in \bigcap g_aD_b\cdot\FF
\]
 and
\[
gK=g_a h\cdot K \subseteq \bigcup g_aD_b\cdot\FF.
\]
 Thus $\Nn$ is a $ck$-network at $g$.

(3) Assume that $\DD$ is a $cp$-network at $e$. Let  $A\subseteq G$ be such that $g\in \overline{A}\setminus A$.
Since $e\in \overline{g^{-1}A}\setminus g^{-1}A$, there is $D_s\in \DD$ such that $e\in D_s \subseteq W_0$ and $D_s \cap g^{-1}A$ is infinite. So $$g_r h D_s \cap A \subseteq g_r (D_t\cdot D_s) \cap A$$ is infinite.  As $g\in g_r (D_t\cdot D_s) \in \Nn$ and
\[
g_r (D_t\cdot D_s) = g( h^{-1} \cdot D_t\cdot D_s) \subseteq g \cdot W_0^{-1} \cdot W_0^2 \subseteq g \cdot W^3 \subseteq g \cdot U,
\]
we obtain that $\Nn$ is a $cp$-network at $g$.
\end{proof}

As a corollary of this theorem we obtain the following extensions of some results from \cite{GKL2} in the class of {\em separable} locally convex spaces.
\begin{corollary} \label{c:Cosmic-LCS}
Let $E$ be a separable lcs satisfying one of the following conditions:
\begin{enumerate}
\item[{\rm (i)}] $E$ is a $(DF)$-space with countable tightness;
\item[{\rm (ii)}] $E$ is a sequential dual metris space;
\item[{\rm (iii)}] $E$ is a strict $(LM)$-space;
\item[{\rm (iv)}] $E$ is a quasibarrelled lcs with a $\GG$-base.
\end{enumerate}
Then $E$ is a $\Pp_0$-space.
\end{corollary}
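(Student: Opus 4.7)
The plan is to reduce the corollary directly to Theorem~\ref{t-Separable} applied to the additive topological group underlying $E$. Since $E$ is already assumed to be separable, Theorem~\ref{t-Separable} will give that $E$ is a $\Pp_0$-space as soon as one verifies that $E$ has countable $cp$-character. As noted after the definition of the strong Pytkeev property in the introduction, this property is literally equivalent to the existence of a countable Pytkeev (that is, a countable $cp$-) network at each point, i.e.\ to countable $cp$-character. Thus the entire task is to verify the strong Pytkeev property in each of the four settings (i)--(iv).

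For this I would simply cite the relevant statements from \cite{GKL2}, which establish the strong Pytkeev property without any separability hypothesis for precisely the four classes considered: every $(DF)$-space of countable tightness, every sequential dual metric lcs, every strict $(LM)$-space, and every quasibarrelled lcs admitting a $\GG$-base. Each of these results already does the substantial technical work of exhibiting a countable $cp$-network at the origin, which by translation invariance of the additive group structure extends to a countable $cp$-network at every point of $E$.

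Once the strong Pytkeev property (equivalently, countable $cp$-character) is in hand for $E$ in each case, combining this with the standing separability assumption and invoking Theorem~\ref{t-Separable} yields immediately that $E$ is a $\Pp_0$-space. Hence the corollary is essentially a separable upgrade of the four strong Pytkeev results of \cite{GKL2} via the group-theoretic bootstrap encoded in Theorem~\ref{t-Separable}.

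The main obstacle is not conceptual but is bookkeeping: one has to match each of (i)--(iv) with the precise formulation in \cite{GKL2} and check that the terminology (especially ``dual metric'', the convention for strict $(LM)$-spaces, and the quasibarrelled version of the $\GG$-base theorem) is used in the same sense. No new constructions are required, because the hard step, namely producing countable Pytkeev networks in these rather large classes of locally convex spaces, was already carried out in \cite{GKL2}, while the new ingredient, Theorem~\ref{t-Separable}, handles the passage from a local to a global countable $cp$-network using only separability of $E$.
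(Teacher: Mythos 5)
Your proposal matches the paper's proof exactly: the paper cites the strong Pytkeev results of \cite{GKL2} (Theorems 3 and 9 there) to obtain countable $cp$-character in each of the four cases, and then applies Theorem \ref{t-Separable} together with separability to conclude that $E$ is a $\Pp_0$-space. No further comment is needed.
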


\begin{proof}
By Theorems 3 and 9 of \cite{GKL2}, the space $E$ has the strong Pytkeev property, so Theorem \ref{t-Separable} applies.
\end{proof}

In Corollary 9 of \cite{GKL2} we proved that the strong dual $F'$ of a Fr\'{e}chet space $F$ has countable tightness if and only if $F'$ has the strong Pytkeev property. If additionally $F'$ is separable, Theorem \ref{t-Separable}  implies
\begin{corollary} \label{c:Frechet-P0}
Let $F$ be a Fr\'{e}chet space whose strong dual $F'$ is separable. Then the strong dual $F'$ has countable tightness if and only if $F'$ is an \mbox{$\Pp_0$-space}.
\end{corollary}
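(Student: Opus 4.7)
The plan is to combine the two results indicated immediately before the statement. Corollary 9 of \cite{GKL2} identifies countable tightness of the strong dual $F'$ of a Fr\'echet space $F$ with the strong Pytkeev property of $F'$; Theorem \ref{t-Separable} (applied to the additive topological group $F'$) identifies $\Pp_0$-groups with separable topological groups of countable $cp$-character. The only bridge needed is the observation, recorded right after the definition of the strong Pytkeev property in the introduction, that this property is equivalent to the existence of a countable Pytkeev network at each point, which is, up to a harmless enlargement, exactly countable $cp$-character at that point.

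Assume first that $F'$ has countable tightness. By Corollary 9 of \cite{GKL2}, $F'$ has the strong Pytkeev property, hence a countable Pytkeev network $\Nn_x$ at each point $x\in F'$. Replacing every $N\in\Nn_x$ by $N\cup\{x\}$ gives a countable family whose members all contain $x$, and this family is readily checked to satisfy the $cp$-network axioms at $x$: it is still a network at $x$, and if $A$ accumulates at $x$ and $O_x$ is a neighborhood of $x$, then any $N\in\Nn_x$ with $N\subseteq O_x$ and $N\cap A$ infinite yields $N':=N\cup\{x\}$ with $x\in N'\subseteq O_x$ and $N'\cap A$ infinite. Thus $F'$ has countable $cp$-character. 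Since $F'$ is a separable topological group by hypothesis, Theorem \ref{t-Separable} concludes that $F'$ is a $\Pp_0$-space.

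Conversely, if $F'$ is a $\Pp_0$-space, it has a countable $cp$-network, hence in particular a countable Pytkeev network at each point, so $F'$ has the strong Pytkeev property, and therefore countable tightness either by Corollary 9 of \cite{GKL2} or directly (the countably many infinite intersections $N\cap A$ witness the Pytkeev property at $x$, which implies countable tightness in a standard way). There is no substantive obstacle once Corollary 9 of \cite{GKL2} and Theorem \ref{t-Separable} are available; the only bookkeeping is the passage between countable Pytkeev networks at a point and countable $cp$-networks at a point described above.
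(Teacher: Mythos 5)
Your argument is correct and takes exactly the paper's route: Corollary 9 of \cite{GKL2} converts countable tightness of $F'$ into the strong Pytkeev property, and Theorem \ref{t-Separable} together with separability converts that into the $\Pp_0$-property. The only addition is your explicit bookkeeping (replacing $N$ by $N\cup\{x\}$) to pass from a countable Pytkeev network at a point to a countable $cp$-network at that point, a step the paper leaves implicit.
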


In what follows we need the following result due to Banakh \cite{Banakh} (its independent proof is given in \cite[Corollary 6.4]{GK-GMS1}).
\begin{theorem}[\cite{Banakh}] \label{c-Banakh}
If $X$ is an $\aleph_0$-space and $Y$ is a  $\Pp_0$-space, then $C_c(X,Y)$ is a $\Pp_0$-space.
\end{theorem}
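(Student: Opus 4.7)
The plan is to manufacture a countable Pytkeev network for $C_c(X,Y)$ by combining a countable $k$-network $\KK$ of $X$ with a countable family $\Nn$ in $Y$ that is simultaneously a Pytkeev network and a $k$-network---such an $\Nn$ exists because $\Pp_0 \Rightarrow \aleph_0$, so one may take the union of the two countable witnesses. After closing $\KK$ and $\Nn$ under finite unions and writing $[F,N] := \{g \in C_c(X,Y) : g(F)\subseteq N\}$, the candidate network is
$$
\Pp := \left\{\bigcap_{j=1}^r [F_j,N_j] : r \in \NN,\ F_j \in \KK,\ N_j \in \Nn\right\},
$$
which is countable, and I would verify that it is a Pytkeev network at every $f \in C_c(X,Y)$.

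The network clause is the easier half. Given $f$ and a basic compact-open neighborhood $U = \bigcap_{i=1}^k [C_i, V_i]$ of $f$, applying the $k$-network of $X$ to the compact $C_i$ inside the open $f^{-1}(V_i)$ yields $F_i \in \KK$ with $C_i \subseteq F_i \subseteq f^{-1}(V_i)$, and then applying the $k$-network $\Nn$ of $Y$ to the compact $f(F_i) \subseteq V_i$ yields $N_i \in \Nn$ with $f(F_i) \subseteq N_i \subseteq V_i$. The element $P := \bigcap_i [F_i,N_i] \in \Pp$ contains $f$ and lies in $U$, because $g \in P$ implies $g(C_i)\subseteq g(F_i)\subseteq N_i\subseteq V_i$ for every $i$.

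The Pytkeev clause is the crux. Given $A$ accumulating at $f$ and $U \ni f$ open, I need $P \in \Pp$ with $f \in P \subseteq U$ and $P \cap A$ infinite. My plan is a localization argument: the accumulation of $A$ at $f$ in the compact-open topology should force the existence of a point $x_0 \in \bigcup_i F_i$ and an infinite $A_0 \subseteq A \cap U$ whose pointwise traces $\{g(x_0) : g \in A_0\} \subseteq Y$ accumulate at $f(x_0)$. Applying the Pytkeev property of $\Nn$ at $f(x_0)$ furnishes $N \in \Nn$ with $f(x_0) \in N \subseteq N_{i_0}$ (where $x_0 \in F_{i_0}$) and $N \cap \{g(x_0) : g \in A_0\}$ infinite. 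Refining the $P$ of the network step by intersecting with $[F, N]$ for a suitable $F \in \KK$ around $x_0$ with $f(F) \subseteq N$ should give the desired element of $\Pp$: it contains $f$, still lies in $U$, and meets $A$ in the infinite set $\{g \in A_0 : g \in P,\ g(x_0) \in N\}$.

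The principal obstacles are twofold. First, the extraction of the single localization point $x_0$ is not automatic: it requires a compactness-and-pigeonhole argument ruling out the scenario in which the deviations of elements of $A$ from $f$ are too diffusely spread across $\bigcup_i F_i$ to concentrate at a point, and this is where the compactness of the $C_i$ and the regularity of $Y$ should enter. Second, the choice of $F \in \KK$ with $x_0 \in F$ and $f(F) \subseteq N$ is delicate because $N$ need not be open in $Y$, so $f^{-1}(N)$ need not be a neighborhood of $x_0$; I expect to circumvent this by enlarging $\Nn$ (keeping it countable) to contain sets whose interiors witness the network at the relevant points, exploiting the $\aleph_0$-structure of $Y$ together with the continuity of $f$.
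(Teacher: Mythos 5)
First, note that the paper offers no proof of Theorem \ref{c-Banakh} to compare against: the result is quoted from \cite{Banakh}, with an independent proof referenced only as \cite[Corollary 6.4]{GK-GMS1}. Your candidate family $\Pp$ and the network half of your argument are the standard ones (essentially Michael's proof that $C_c(X,Y)$ is an $\aleph_0$-space when $X$ and $Y$ are), so everything hinges on the Pytkeev clause, and there your plan has a genuine gap.

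The localization claim --- that accumulation of $A$ at $f$ in the compact-open topology forces a point $x_0$ and an infinite $A_0\subseteq A$ whose traces $\{g(x_0):g\in A_0\}$ accumulate at $f(x_0)$ --- is false. Take $X=\NN$ discrete (an $\aleph_0$-space) and $Y=\RR$, so that $C_c(X,Y)=\RR^{\NN}$ with the product topology; let $f=\mathbf{0}$ and $A=\{e_n:n\in\NN\}$ where $e_n(k)=\delta_{nk}$. Every basic neighborhood of $\mathbf{0}$ constrains only finitely many coordinates, hence contains all but finitely many $e_n$, so $A$ accumulates at $\mathbf{0}$; yet for every $k$ and every infinite $A_0\subseteq A$ the trace $\{g(k):g\in A_0\}$ is contained in $\{0,1\}$ and accumulates nowhere. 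This is not a pathology that a ``compactness-and-pigeonhole argument'' can rule out: it is precisely the regime in which the Pytkeev clause holds for a different reason, namely that the network-step set $P=\bigcap_i[F_i,N_i]$ already contains every $g\in A$ that deviates from $f$ only off $\bigcup_i F_i$ (since $f(F_i)\subseteq N_i$ forces nothing on such $g$ beyond $g(F_i)\subseteq N_i$, which holds when $g=f$ on $\bigcup_i F_i$), and hence already meets $A$ in an infinite set. A correct proof therefore requires a genuine case split: either the network-step $P$ meets $A$ infinitely and you are done, or it does not, and then one must iteratively refine $P$ inside $U$ using the Pytkeev property of $Y$ at points of $f(\bigcup_i F_i)$, deriving a contradiction with $f\in\overline{A}$ if every member of $\Pp$ between $f$ and $U$ were to meet $A$ finitely. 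Collapsing this dichotomy into a single localization point, as you propose, cannot work. Your second stated obstacle (that $f^{-1}(N)$ need not be a neighborhood of $x_0$) is real but comparatively minor and is handled by routine $k$-network bookkeeping; the first obstacle is where the proof actually lives, and your plan for it is not repairable as stated.
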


Theorems \ref{t-Separable} and \ref{quasiC} with the comments after it, may suggest also the following
\begin{example}\label{jointly}
There exists a large class of lcs $E$ for which the existence of a  $\mathfrak{G}$-base (the condition $\dd$ is not required) in $E$ implies that  $E$ is a $\Pp_{0}$-space.
\end{example}
\begin{proof}
We prove the following two facts. (i) If $C_c(X)$ is a separable space admitting a $\mathfrak{G}$-base, then $C_{c}(X)$ is a $\Pp_{0}$-space. (ii) There exist however a $\Pp_{0}$-spaces  $C_{c}(X)$ (hence having  countable $cp$-character) which  do not admit a $\mathfrak{G}$-base.
We need the following main result of \cite{feka}: $C_{c}(X)$ has a $\mathfrak{G}$-base if and only if $X$ admits a compact resolution swallowing compact sets, i.e. a family $\{K_{\alpha}:\alpha\in\mathbb{N}^{\mathbb{N}}\}$ of compact sets covering $X$ such that $K_{\alpha}\subseteq K_{\beta}$ for all $\alpha\leq\beta$ in $\mathbb{N}^{\mathbb{N}}$ and each compact set of $X$ is contained in some $K_{\alpha}$.
Since $C_{c}(X)$ is separable, it admits a weaker metrizable and separable topology; hence $X$ admits such a weaker topology. By the above remark we conclude that $X$ has a compact resolution swallowing compact sets. Then applying \cite[Theorem 3.6]{COT} the space $X$ is an $\aleph_{0}$-space. Hence $C_{c}(X)$ is a $\Pp_{0}$-space by Theorem \ref{c-Banakh}. To complete the claim (ii) let $X=\mathbb{Q}$ be the space of rational numbers. Theorem \ref{c-Banakh} implies that $C_c(\mathbb{Q})$  is  a $\Pp_{0}$-space but (as $\mathbb{Q}$ does not have a compact resolution swallowing compact sets) it does not have a $\mathfrak{G}$-base.
\end{proof}

Below we prove Theorem \ref{quasiP}.
\begin{proof} [Proof of Theorem \ref{quasiP}]

The idea of the proof uses some arguments from the proof of Theorem 1.3 of \cite{GaK}.
If $x$ is an isolated point, we set $\mathbf{M}_x :=\NN^\NN$ and $U_\alpha :=\{ x\}$ for each $\alpha\in\mathbf{M}_x$. Clearly, the family $\{ U_\alpha: \alpha\in \mathbf{M}_x\}$ is as desired in all three cases of the theorem. So we shall assume that $x$ is not isolated.

(i) Assume that $X$ has a countable $cn$-network $\mathcal{D}=\{ D_i\}_{i\in\NN}$ at $x$.
Recall that $D_i$ contains $x$ for every $i\in\NN$.

{\it Step} 1.
For every $k,i\in\NN$, set
\[
D_k^i := \bigcap_{l=1}^{k} D_{i-1+l}.
\]
So, for each $i\in\NN$, the sequence $\{ D_k^i \}_{k\in\NN}$ is decreasing. For every $\alpha=(\alpha_i)_{i\in\NN}\in\NN^\NN$, set
\[
A_\alpha := \bigcup_{i\in\NN} D^i_{\alpha_i} = \bigcup_{i\in\NN} \bigcap_{l=1}^{\alpha_i} D_{i-1+l} .
\]
Clearly, $x\in A_\alpha$ and $A_\alpha \subseteq A_\beta$ for each $\alpha,\beta\in\NN^\NN$ with $\beta\leq\alpha$.

{\it Step} 2. Let $V$ be a neighborhood of $x$. Set $J(V):=\{ j\in\NN : D_j \subseteq V\}$. Since $x$ is not isolated, the family $J(V)$ is infinite. Now we prove that  the following  condition holds.
\begin{itemize}
\item[$\mathbf{(A)}$] If $W$ is a neighborhood of $x$ and $J(W):=\{ j\in\NN : D_j \subseteq W\} =\{ n_k\}_{k\in\NN}$ with $n_1 <n_2< \dots$, then there is $\alpha=\alpha(W) \in \NN^\NN$ such that
  \begin{itemize}
  \item[{\rm ($A_1$)}] $\alpha_{n_k} = 1$ for every $k\in\NN$;
  \item[{\rm ($A_2$)}] $A_\alpha = \bigcup_{k\in\NN} D_{n_k} (\subseteq W)$ is a neighborhood of $x$.
  \end{itemize}
\end{itemize}
We construct  $\alpha=\alpha(W)$ as follows. If $i=n_k$ for some $k\in\NN$ we set $\alpha_i =1$. So $D^i_{\alpha_i}= D_{n_k}$. Set $n_0 :=0$. Now, if $n_{k-1} <i<n_k$ for some $k\in\NN$, we set $\alpha_i := n_k - i+1$. Then
\[
D^i_{\alpha_i} = \bigcap_{l=1}^{\alpha_i} D_{i-1+l}\subseteq D_{i-1+\alpha_i} = D_{n_k}.
\]
Hence $A_\alpha = \bigcup_{k\in\NN} D_{n_k}$. Since $\mathcal{D}$ is a $cn$-network at $x$, $A_\alpha$ is  a neighborhood of $x$. Thus ($A_1$) and ($A_2$) are satisfied.

{\it Step} 3. Denote by $\mathbf{M}_x$ the set of all $\alpha \in  \NN^\NN$  of the form $\alpha=\alpha(W)$ for some neighborhood $W$ of $x$. For each $\alpha\in\mathbf{M}_x$ set $U_\alpha :=A_\alpha$. Now, by $\mathbf{(A)}$, the family $\{ U_\alpha : \alpha\in\mathbf{M}_x\}$ is a small base at $x$.

{\it Step} 4. Now we check that the condition $\dd$ holds.
It is clear that $\bigcup_{k\in\NN} D_k (\alpha)\subseteq U_\alpha$. We prove the converse inclusion as follows
\[
\begin{split}
\bigcup_{k\in\NN} D_k (\alpha) & = \bigcup_{k\in\NN} \bigcap_{\beta\in I_k(\alpha)\cap\mathbf{M}_x} U_\beta \\
& = \bigcup_{k\in\NN} \bigcap_{\beta\in I_k(\alpha)\cap\mathbf{M}_x} \left( \bigcup_{i\in\NN} \bigcap_{l=1}^{\beta_i} D_{i-1+l}\right) \; (\mbox{take only } i=k) \\
  & \supseteq \bigcup_{k\in\NN} \bigcap_{\beta\in I_k(\alpha)\cap\mathbf{M}_x}  \left(  \bigcap_{l=1}^{\beta_k} D_{k-1+l} \right) \; (\mbox{since } \beta_k =\alpha_k) \\
  & = \bigcup_{k\in\NN} \bigcap_{l=1}^{\alpha_k} D_{k-1+l} = U_\alpha .
\end{split}
\]

Conversely, if $X$ has a small base at $x$ satisfying the condition $\dd$, then clearly the countable family $\mathcal{D}_{\mathcal{U}(x)}$ is a $cn$-network at $x$.

(ii) Assume that $X$ has a countable $ck$-network  $\mathcal{D}=\{ D_i\}_{i\in\NN}$ at $x$. Without loss of generality we  may assume that $\mathcal{D}$  is closed under taking finite unions. Similarly as in item (i)  we can prove that $X$ has a small base $\{ U_\alpha : \alpha\in \mathbf{M}_x\}$ at $x$ satisfying the condition $\dd$. We show that the countable family $\mathcal{D}_{\mathcal{U}(x)}$ is also a $ck$-network at $x$.

Let $O_x$ be a neighborhood of $x$. Set $W:= \bigcup_{j\in J(O_x)} D_j$.
Since $\mathcal{D}$ is a $ck$-network at $x$, there is a neighborhood $U_x \subseteq O_x$ of $x$ such that for each compact subset $K\subseteq U_x$ there is $j\in J(O_x)$ such that $K\subseteq D_j \subseteq W\subseteq O_x$.  So $U_x \subseteq W$ and hence $W$ is a neighborhood of $x$.

Let $K$ be a compact subset of $U_x$. Set $\alpha = \alpha(O_x)$. By the construction of $W$, there exists $i\in J(O_x)$ such that $K\subseteq D_i \subseteq W$.  By the definition of $J(O_x)$, we have $i=n_k$ for some $k\in\NN$. So $x\in D_i =D_{n_k}$ and $\alpha_{n_k}=1$ by  $(A_1)$. As
\[
\begin{split}
D_{n_k}(\alpha) & = \bigcap_{\beta\in I_{n_k}(\alpha)\cap\mathbf{M}_x} A_\beta = \bigcap_{\beta\in I_{n_k}(\alpha)\cap\mathbf{M}_x} \left( \bigcup_{i\in\NN} \bigcap_{l=1}^{\beta_i} D_{i-1+l}\right) (\mbox{take } i=n_k) \\
& \supseteq \bigcap_{\beta\in I_{n_k}(\alpha)\cap\mathbf{M}_x}  \left(  \bigcap_{l=1}^{\beta_{n_k}} D_{n_k -1+l} \right) \; (\mbox{since } \beta_{n_k} =\alpha_{n_k}=1 ) = D_{n_k},
\end{split}
\]
we obtain that $K\subseteq D_i \subseteq D_{n_k}(\alpha) \subseteq W$. Thus $\mathcal{D}_{\mathcal{U}(x)}$ is a countable \mbox{$ck$-network} at $x$.

The converse assertion is clear.

(iii) Assume that $X$ has a countable $cp$-network  $\mathcal{D}=\{ D_i\}_{i\in\NN}$ at $x$. Without loss of generality we may also assume that  $\mathcal{D}$  is closed under taking finite unions. As in item (i), we  prove that $X$ has a small base $\{ U_\alpha : \alpha\in \mathbf{M}_x\}$ at $x$ satisfying the condition $\dd$. We show that the countable family $\mathcal{D}_{\mathcal{U}(x)}$ is also a $cp$-network at $x$.

Let $A\subseteq X$ with $x\in \overline{A}\setminus A$ and let $O_x$ be a neighborhood  of $x$. Set $\alpha = \alpha(O_x)$ and $W:= \bigcup_{j\in J(O_x)} D_j$. Since $\DD$ is also a $cn$-network, $W$ is a neighborhood of $x$. By the construction of $W$ and the definition of \mbox{$cp$-network}, there exists $i\in J(O_x)$ such that $x\in D_i \subseteq W$ and $D_i \cap A$ is infinite.  By the definition of $J(O_x)$, we have $i=n_k \in J(O_x)$ for some $k\in\NN$. So $D_i =D_{n_k}$. In (ii) we proved that $D_i \subseteq D_{n_k}(\alpha)$. So $A \cap D_{n_k}(\alpha)$ is infinite. Thus $\mathcal{D}_{\mathcal{U}(x)}$ is a countable $cp$-network at $x$.

The converse assertion is trivial.
\end{proof}

\begin{remark} {\em
In Proposition 2 of \cite{GKL2} it is shown that there is a topological group $G$ with a $\GG$-base which has uncountable tightness (see also Example \ref{exa-Cc(b)} below). So $cn_\chi (G) >\aleph_0$ (see \cite{GK-GMS1}).
Also the compact group $b\mathbb{Z}$ has a small base  by \cite{GKKLP}, and $\chi(b\mathbb{Z})=2^{\aleph_0}=\mathfrak{c}$. }
\end{remark}

It is somewhat  surprising   that the validity of the condition $\dd$ essentially depends on the chosen family $\mathbf{M}_x$ as the following example shows.

\begin{example} \label{exa1} {\em
We consider the Banach  separable space $\ell^1$ and build a small base at $\mathbf{0}$ in $\ell^1$ as follows. Set $\MM_0 := \NN^\NN \cap \ell^\infty$. For every $\alpha=(\alpha_i)_{i\in\NN} \in \mathbf{M}_0$, set
\[
U_\alpha =\left\{ (x_i)_{i\in\NN} \in \ell^1 : \; \sum_{i} \alpha_i |x_i| <1 \right\}.
\]
Clearly, $\{ U_\alpha : \alpha\in\mathbf{M}_0 \}$ is a  small base in $\ell^1$.
For each $\alpha=(\alpha_i)\in \mathbf{M}_0$  and every $k\in\NN$ we have
\[
D_k(\alpha) =\{ (x_i)\in\ell^1 : \alpha_1|x_1| +\cdots + \alpha_k|x_k| <1 \mbox{ and } 0= x_{k+1} =x_{k+2}=\dots \},
\]
and $\bigcup_{k\in\NN} D_k(\alpha)\not= U_\alpha$. So condition $\dd$ does not hold. }
\end{example}

We shall need the following
\begin{proposition}[\cite{Banakh1}] \label{fB}
Any countable $cp$-network at a point $x$ of a topological space $X$ is  a $ck$-network at $x$.
\end{proposition}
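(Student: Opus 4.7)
The plan is to prove the statement by three stages: a reduction step, a derivation of the $cn$-network property at $x$ from the $cp$-property, and a compactness argument that forces any compact subset of a suitable neighborhood of $x$ to be absorbed by a single member of the network.

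First, without loss of generality, I would enlarge the countable $cp$-network $\Nn = \{N_i : i \in \NN\}$ at $x$ to be closed under finite unions of its elements containing $x$; this enlargement preserves both countability and the $cp$-property. Under this closure, verifying the $ck$-network condition at $x$ for a neighborhood $O_x$ of $x$ reduces to exhibiting a neighborhood $U_x \subseteq O_x$ such that every compact $K \subseteq U_x$ is contained in a single $M \in \Nn$ with $x \in M \subseteq O_x$ (the required finite subfamily $\FF$ then being the singleton $\{M\}$).

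Second, I would show that $\Nn$ is also a $cn$-network at $x$. For a neighborhood $O_x$, set $V := \bigcup\{N \in \Nn : x \in N \subseteq O_x\}$; if $V$ were not a neighborhood of $x$ then $X \setminus V$ would accumulate at $x$, and the $cp$-property would furnish some $N \in \Nn$ with $x \in N \subseteq O_x$ and $N \cap (X \setminus V)$ infinite, contradicting $N \subseteq V$. Enumerating $\{M_n\}_{n \in \NN} := \{N \in \Nn : x \in N \subseteq O_x\}$ as an increasing chain $M_1 \subseteq M_2 \subseteq \cdots$ (possible by closure under finite unions), we obtain $V = \bigcup_n M_n$, which I take as $U_x$.

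Third, I would show that every compact $K \subseteq U_x$ lies in some $M_n$. Arguing by contradiction, assume $K \not\subseteq M_n$ for every $n$; since $(M_n)$ is increasing and $K \subseteq \bigcup_n M_n$, the assignment $y \mapsto \min\{m : y \in M_m\}$ is unbounded on $K$ and each $K \setminus M_n$ is infinite. Choose $y_n \in K \setminus M_n$ and set $A := \{y_n : n \in \NN\}$. If $A$ accumulates at $x$, the $cp$-property applied to $A$ and $O_x$ yields $M_j$ with $M_j \cap A$ infinite; but $y_n \notin M_n \supseteq M_j$ for $n \geq j$, so $M_j \cap A \subseteq \{y_1, \ldots, y_{j-1}\}$ is finite, the desired contradiction.

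The main obstacle is guaranteeing that $A$ accumulates at $x$, which is not automatic in the absence of first countability. I would resolve this by a diagonal choice of the $y_n$: enumerate the countable family $\{N \in \Nn : x \in N\} = \{R_k : k \in \NN\}$ and, at stage $n$, select $y_n \in R_{k(n)} \cap (K \setminus M_n)$ where $k(n)$ cycles through every index $k$ infinitely often. Provided $R_{k(n)} \cap (K \setminus M_n)$ is nonempty, this forces $A \cap R_k$ to be infinite for every $k$, and since $\{R_k\}$ is a network at $x$, every neighborhood of $x$ contains some $R_k$, so $A$ meets every neighborhood of $x$ infinitely often and accumulates at $x$. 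A subsidiary application of the $cp$-property to the accumulating set $K \setminus M_n$ (whose closure at $x$ is secured by the $cn$-property together with the compactness of $K$) ensures the nonemptiness of $R_{k(n)} \cap (K \setminus M_n)$ at each stage, closing the argument.
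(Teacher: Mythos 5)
The paper itself offers no proof of this proposition---it is quoted from Banakh's preprint \cite{Banakh1}---so there is nothing internal to compare your argument with; it has to stand on its own. Your first two stages do: closing $\Nn$ under finite unions and thereby reducing the $ck$-condition to covering each compact $K\subseteq U_x$ by a \emph{single} member, and the derivation of the $cn$-property at $x$ (hence that $U_x=\bigcup_n M_n$ is a neighbourhood of $x$), are both correct and standard. Your observation that $K\setminus M_n$ is infinite for every $n$ whenever $K\subseteq\bigcup_n M_n$ and $K\not\subseteq M_n$ for all $n$ is also fine.

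The gap is in Stage 3, and it is fatal as written. Your contradiction requires a set $A\subseteq K$ with $x\in\overline{A}\setminus A$, so that the $cp$-property \emph{at the point $x$} can be applied to $A$. But $K$ is a compact subset of a Hausdorff space, hence closed, so $\overline{A}\subseteq K$ for every $A\subseteq K$; if $x\notin K$ --- which is entirely possible, since the definition of a $ck$-network quantifies over \emph{all} compact subsets of $U_x$, including singletons and compacta lying far from $x$ --- then no subset of $K$ whatsoever accumulates at $x$, and the $cp$-property at $x$ yields nothing about $K$. Your proposed repair does not close this: the assertion that $K\setminus M_n$ is ``accumulating at $x$'' fails for exactly the same reason ($\overline{K\setminus M_n}\subseteq K$), so the ``subsidiary application of the $cp$-property'' has no accumulating set to act on; and even where accumulation at $x$ did hold, the $cp$-property only produces \emph{some} $N\in\Nn$ meeting the set in an infinite set, not the particular $R_{k(n)}$ demanded at each stage of your diagonal scheme (an $R_k$ may equal $\{x\}$ or be disjoint from $K$). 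The substance of the proposition is precisely the control of compact sets that do \emph{not} cluster at $x$ --- the case your argument covers is the easy one. For comparison, in the global situation (a $cp$-network at every point) one chooses $y_n\in K\setminus M_n$, takes a cluster point $z\in K$ of $(y_n)$, and applies the Pytkeev property \emph{at $z$}; that tool is unavailable here, which is why the local statement needs a genuinely different idea rather than a diagonal refinement of the choice of the $y_n$.
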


Now we are ready to prove Theorem \ref{tMetr-cn}.
\begin{proof} [Proof of Theorem \ref{tMetr-cn}]

The implications (i)$\Rightarrow$(ii) and (iii)$\Rightarrow$(iv)  are clear, (ii)$\Rightarrow$(iii) follows from Proposition \ref{fB}, and (v)$\Rightarrow$(iv) follows from Theorem \ref{quasiP}(i).

(i)$\Rightarrow$(v) If $\{ V_n\}_{n\in\NN}$  is a decreasing base of neighborhoods at the unit $e$ of $G$, then the family $\{ U_\alpha : \alpha\in \NN^\NN\}$, where $U_{\alpha}:=V_{\alpha_{1}}$  for $\alpha=(\alpha_{i})\in\mathbb{N}^{\mathbb{N}}$, is a $\GG$-base satisfying the condition $\dd$.

(iv)$\Rightarrow$(i)
Let $G$ have a countable $cn$-character. We have to show that $G$ is metrizable. We prove that $G$ has  a countable base of neighborhoods at the unit $e$. By Theorem \ref{quasiP}(i) there exists a small local base $\mathcal{U}=\{U_\alpha : \alpha\in \mathbf{M}\}$ at $e$ satisfying the condition $\dd$. We show that the countable family $\{\overline{D_{k}(\alpha)}\cdot\overline{D_{k}(\alpha)}^{\ -1}:\alpha\in\mathbf{M}, k\in\NN\}$ contains a base of neighborhoods of  $e$ in $G$. Indeed,  let $W$ be an open neighborhood of $e$. Choose a symmetric open neighborhood $V$ of $e$ such that $V\cdot V \subseteq \overline{V}\cdot\overline{V}\subseteq W$. There exists $\alpha\in\mathbf{M}$ with $U_{\alpha}=\bigcup_{k}D_{k}(\alpha)\subseteq V$.  Since $\mathrm{Int}(U_{\alpha})$ is open in $G$ and $G$ is Baire, there exists $k\in\NN$ such that $\mathrm{Int}(U_{\alpha})\cap\overline{D_{k}(\alpha)}$ has a non-empty interior in $U_{\alpha}$, so also in $G$. Therefore $\overline{D_{k}(\alpha)}\cdot\overline{D_{k}(\alpha)}^{\ -1}$ is a neighborhood of $e$ which is  contained in $W$.
\end{proof}
We do not know whether the assumption on a $\GG$-base to satisfy the condition $\dd$ can be omitted in Theorem \ref{tMetr-cn}(v). However, Example \ref{exa1} shows that the condition $\dd$ essentially depends on the chosen family $\mathbf{M}_x$. This  suggests the next question:
\begin{problem} \label{qG-D}
Let $G$ be a Baire topological group with a $\GG$-base $\mathcal{U}$. Does $\mathcal{U}$ necessarily satisfy the condition $\dd$?
\end{problem}


\section{Applications to free (abelian) topological groups and topological vector spaces} \label{secTVS}

Now we apply the obtained results to the important classes of free lcs and free (abelian) topological groups.
The following  concept is due to Markov \cite{Mar}, see also Graev \cite{Gra}.
\begin{definition}
Let $X$ be a Tychonoff space. A topological group $F(X)$ (respectively, $A(X)$) is called {\em  the (Markov) free ({\em respectively}, abelian) topological  group} over  $X$ if $F(X)$ (respectively, $A(X)$) satisfies the following conditions:
\begin{enumerate}
\item[{\rm (i)}] There is a continuous mapping $i: X\to F(X)$ (respectively, $i: X\to A(X)$) such that $i(X)$ algebraically generates $F(X)$ (respectively, $A(X)$).
\item[{\rm (ii)}] If $f: X\to G$ is a continuous mapping to a (respectively, abelian) topological  group $G$, then there exists a continuous homomorphism ${\bar f}: F(X) \to G$ (respectively, ${\bar f}: A(X) \to G$) such that $f={\bar f} \circ i$.
\end{enumerate}
\end{definition}
The topological  groups $F(X)$ and $A(X)$ always exist and are essentially unique. Note that the mapping $i$ is a topological embedding \cite{Mar, Gra}. If $X$ is a discrete space, it is clear that $F(X)$ and $A(X)$ are also discrete. It is known (see \cite{MMO}) that for each $\mathcal{MK}_\omega$-space $X$, the groups  $F(X)$ and $A(X)$ are also $\mathcal{MK}_\omega$-spaces  and hence sequential.

Analogously we can define free lcs (see \cite{Mar, Rai}):
\begin{definition}
Let $X$ be a Tychonoff space. The {\em  free lcs} $L(X)$ on  $X$ is a pair consisting of a lcs $L(X)$ and  a continuous mapping $i: X\to L(X)$ such that every  continuous mapping $f$ from $X$ to a lcs $E$ gives rise to a unique continuous linear operator ${\bar f}: L(X) \to E$  with $f={\bar f} \circ i$.
\end{definition}
Also the free lcs $L(X)$  always exists and is  unique. The set $X$ forms a Hamel basis for $L(X)$, and  the mapping $i$ is a topological embedding \cite{Rai, Flo1, Flo2, Usp}.
The identity map $id_X :X\to X$ extends to a canonical homomorphism $id_{A(X)}: A(X)\to L(X)$. It is known that $id_{A(X)}$ is an embedding of topological groups \cite{Tkac, Usp2}.
For example, if $X$ is a finite space of cardinality $n$, then $L(X)\cong \mathbb{R}^n$; and if $X$ is a countably infinite discrete space, then $L(X)\cong\phi$, where $\phi$ is the countable inductive limit of the increasing sequence $(\mathbb{R}^k)_{k\in\NN}$.


It is well-known that the space $L(X)$ admits a canonical continuous monomorphism $L(X)\to C_c (C_c (X))$. If $X$ is a $k$-space, this monomorphism is an embedding of lcs \cite{Flo1, Flo2, Usp}. So, for  $k$-spaces, we obtain the next chain of topological embeddings:
\begin{equation} \label{emb}
A(X) \hookrightarrow L(X) \hookrightarrow C_c (C_c (X)).
\end{equation}

Denote by $Q=[0,1]^\NN$ the Hilbert cube. Since $Q$ is a subspace of $\mathbb{R}^\NN$, $Q$ has a $\GG$-base at each its point satisfying the condition $\dd$.
Below we prove Theorem \ref{tFree}.

\begin{proof}[Proof of Theorem \ref{tFree}]
(i) Since $X$ is an  $\mathcal{MK}_\omega$-space,  the space $C_c(X)$ is a Polish space  by
\cite[4.2.2 and 5.8.1]{mcoy}. Thus $C_c (C_c(X))$ has a $\mathfrak{G}$-base satisfying the condition $\dd$ by  Theorems 2 and 9 of  \cite{GKL2} and is a $\Pp_0$-space by Theorem \ref{c-Banakh}. Now (\ref{emb}) implies that  $L(X)$ and $\phi =L(\NN)$ also have a $\mathfrak{G}$-base satisfying the condition $\dd$ and  are  $\Pp_0$-spaces. As  the groups $A(X)$ and $F(X)$ are $\mathcal{MK}_\omega$-spaces,  they embed into $\phi\times Q$ by \cite{Sakai-K}. Thus $A(X)$ and $F(X)$ also have a $\mathfrak{G}$-base satisfying the condition $\dd$ and  are $\Pp_0$-spaces (see \cite{Banakh,GKL}).

(ii) It is well-known that, if $X$ is not discrete, then $A(X)$ and $F(X)$ are not even Fr\'echet-Urysohn. Now Proposition 2.4 and Corollary 3.14 of \cite{GKL} and (i) imply
\[
\begin{split}
\mathfrak{d} & \leq \min\{\chi(A(X)),\chi(F(X)), \chi(L(X))\} \\
& \leq \max\{\chi(A(X)),\chi(F(X)), \chi(L(X))\} \leq \mathfrak{d},
\end{split}
\]
that proves (a). Now let $X$ be discrete, clearly $\chi(A(X))=\chi(F(X))=1$. If $X$ is finite, then $L(X)=\mathbb{R}^{|X|}$ is metrizable, and hence $\chi(L(X))=\aleph_0$. If $X$ is infinite, then $X$ is countably infinite as an $\mathcal{MK}_\omega$-space. So $L(X)=\phi$. Now Proposition 2.4 and Corollary 3.14 of \cite{GKL} imply that $\chi(L(X))=\mathfrak{d}$.
\end{proof}
Note (see \cite{Gab-MSJ}) that for a metrizable space $X$, the space $L(X)$ is a \mbox{$\Pp_0$-space} if and only if $L(X)$ has countable tightness if and only if $X$ is separable.

At the end of this section we consider some applications  to topological vector spaces.


Recall  that a topological space $X$ has the {\it property $\left( \alpha_{4}\right) $ at a point $x\in X$} if for any $\{x_{m,n}:\left( m,n\right) \in \mathbb{N}\times \mathbb{N}\}\subset X$ with $\lim_{n}x_{m,n}=x\in X$, $m\in \mathbb{N}$, there exists a sequence $\left( m_{k}\right) _{k}$ of distinct natural numbers and a sequence $\left( n_{k}\right) _{k}$ of natural numbers such that $\lim_{k}x_{m_{k},n_{k}}=x$; $X$ has the {\it property $\left( \alpha_{4}\right) $} or is an {\it $\left( \alpha_{4}\right) $-space} if it has the property $\left( \alpha_{4}\right)$ at each point $x\in X$. Nyikos proved in \cite[Theorem 4]{nyikos} that any Fr\'{e}chet-Urysohn topological group satisfies $\left( \alpha_{4}\right)$. However there are Fr\'{e}chet-Urysohn topological spaces which do not have $\left( \alpha_{4}\right)$.
Further, in \cite[Lemma 1.3]{ChMPT} it was shown that for a Fr\'{e}chet-Urysohn topological group $G$ the property $\left( \alpha_{4}\right)$ can  be strengthened by the  double sequence property (AS):
\begin{itemize}
\item[{\rm (AS)}] For any family $\{x_{n,k} : (n,k)\in \NN\times\NN \}\subseteq G$, with $\lim_n x_{n,k} =x\in G, k=1,2,\dots,$ it is possible to choose strictly increasing sequences of natural numbers $(n_i)_{i\in\NN}$ and $(k_i)_{i\in\NN}$, such that $\lim_i x_{n_i, k_i} =x$.
\end{itemize}


For a group $G$, $g\in G$ and $n\in\NN$, we set $g^n := g\cdots g$ ($n$ times) and, if $G$ is abelian, $ng:= g+\cdots +g$. Let $G$ be a topological group with $(AS)$ and a sequence $(g_n)$ in $G$ converge to the unit $e$. Clearly, $\lim_{n} g^m_n =e$ for every $m\in\NN$. Applying $(AS)$ to the family $\{ (g_n^m): m\in\NN\}$ of the powers of the sequence $(g_n)$  we propose the following property $(PS)$ which is weaker than $(AS)$.
\begin{definition}
We say that a topological group $G$ has {\em the property $(PS)$} if for every sequence $(g_n)_{n\in\NN} \subseteq G$ converging to the unit $e$ there are strictly increasing sequences $(m_k)$ and $(n_k)$ of natural numbers such that $g^{m_k}_{n_k} \to e.$
\end{definition}

Lemma 1.3 of \cite{ChMPT} immediately implies
\begin{proposition} \label{p-C4}
Any Fr\'{e}chet-Urysohn topological group $G$ has $(PS)$.
\end{proposition}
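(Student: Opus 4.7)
The plan is to deduce $(PS)$ directly from the stronger property $(AS)$, which the excerpt attributes to every Fr\'echet--Urysohn topological group by Lemma 1.3 of \cite{ChMPT}. Given an arbitrary sequence $(g_n)_{n\in\NN}$ in $G$ with $g_n \to e$, I would introduce the auxiliary double-indexed family
\[
x_{n,m} := g_n^m \quad \text{for } (n,m) \in \NN\times\NN,
\]
and then verify that this family satisfies the hypothesis of $(AS)$ at the point $x = e$.

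For every fixed $m \in \NN$, continuity of the group multiplication (applied inductively, or equivalently continuity of the $m$-fold product map $G \to G$) gives $\lim_n g_n^m = e^m = e$, so that $\lim_n x_{n,m} = e$ for each $m$. This is exactly the standing assumption of $(AS)$ as formulated just above the proposition.

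Applying $(AS)$ to this family then yields strictly increasing sequences $(n_i)_{i\in\NN}$ and $(m_i)_{i\in\NN}$ of natural numbers such that $\lim_i x_{n_i, m_i} = e$, i.e.\ $\lim_i g_{n_i}^{m_i} = e$. Relabelling the index $i$ as $k$, this is precisely the conclusion required by $(PS)$. Since the argument uses nothing beyond continuity of the group operation together with the cited Lemma~1.3 of \cite{ChMPT}, there is no essential obstacle — the proof is really a matter of unpacking definitions and observing that the powers of a null-sequence form a double family to which $(AS)$ can be applied.
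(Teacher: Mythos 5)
Your argument is correct and is exactly the paper's intended proof: the paragraph preceding the definition of $(PS)$ already sets up the double family $\{g_n^m\}$ of powers with $\lim_n g_n^m=e$ for each fixed $m$, and the paper then simply cites Lemma~1.3 of \cite{ChMPT} (which gives $(AS)$ for Fr\'echet--Urysohn groups) to extract the diagonal subsequence. Your write-up just makes the routine continuity check and the indexing explicit.
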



As usual we denote by $\sigma(E,E')$ the weak topology of a locally convex space $E$. Recall that an abelian topological group $G$ is {\em maximally almost periodic} (MAP) if its continuous characters separate the points of $G$. A MAP abelian group $G$ endowed with the Bohr topology we denote by $G^+$. Recall also that a MAP abelian group $G$ (respectively, a lcs $E$) has the {\em Schur property} if $G^+$ and $G$ (respectively, $(E, \sigma(E,E'))$ and $E$) have the same set of convergent sequences. The next corollary shows that the class of topological groups having $(PS)$ is much wider than the class of Fr\'{e}chet-Urysohn topological group. Proposition \ref{p-C4} applies to get the following
\begin{corollary} \label{p-C4-Schur}
Let $(G,\tau)$ be a Fr\'{e}chet-Urysohn topological group (respectively, lcs) with the Schur property. Then $G^+$ (respectively, $(G,\sigma(G,G'))$) has $(PS)$.
\end{corollary}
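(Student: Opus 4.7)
The plan is to reduce property $(PS)$ for the Bohr (respectively weak) topology to property $(PS)$ for the original topology $\tau$ by exploiting the fact that, under the Schur property, convergent sequences are the same in both topologies, while one topology is coarser than the other.

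More precisely, I would start with a sequence $(g_n)_{n\in\NN}$ in $G^+$ converging to the unit $e$ in the Bohr topology. By the Schur property, such a sequence also converges to $e$ in the original group topology $\tau$. Since $(G,\tau)$ is Fr\'echet--Urysohn, Proposition \ref{p-C4} yields strictly increasing sequences $(m_k)_{k\in\NN}$ and $(n_k)_{k\in\NN}$ of natural numbers such that $g_{n_k}^{m_k}\to e$ in $(G,\tau)$. Now the identity map $(G,\tau)\to G^+$ is continuous (the Bohr topology is always coarser than the original one), so $g_{n_k}^{m_k}\to e$ in $G^+$ as well, which is exactly what $(PS)$ requires. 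The locally convex case is identical after replacing $G^+$ by $(G,\sigma(G,G'))$, powers by integer multiples, and using that $\sigma(G,G')$ is coarser than $\tau$.

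There is no real obstacle here: the argument is a short three-step chain (Schur to pass from the weak convergence hypothesis to $\tau$-convergence, Proposition \ref{p-C4} to extract the diagonal subsequence, and continuity of the identity to descend the resulting convergence back to the coarser topology). The only point worth flagging is that property $(PS)$ is formulated in terms of sequences alone, so the Schur property (which by definition concerns only convergent sequences, not arbitrary nets) is precisely the right hypothesis; no Fr\'echet--Urysohn-type assumption on $G^+$ itself is needed, which is exactly why the corollary produces $(PS)$ in a potentially much larger class of groups than the Fr\'echet--Urysohn ones covered by Proposition \ref{p-C4}.
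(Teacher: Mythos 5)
Your proof is correct and is exactly the argument the paper intends: the paper's own ``proof'' is the single phrase ``Proposition \ref{p-C4} applies,'' and your three-step chain (Schur property to upgrade weak/Bohr convergence to $\tau$-convergence, Proposition \ref{p-C4} to extract the diagonal subsequence, coarseness of the Bohr/weak topology to push the conclusion back down) is precisely the omitted detail.
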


Recall (see \cite{Gao}) that a family $\Nn$  of subsets of a topological space $X$ is called a {\em $cs^\ast$-network at  a point} $x\in X$ if for each sequence $(x_n)_{n\in\NN}$ in $X$ converging to  $x$ and for each neighborhood $O_x$ of $x$ there is a set $N\in\mathcal{N}$ such that $x\in N\subseteq O_x$ and the set $\{n\in\NN :x_n\in N\}$ is infinite; the smallest size $|\Nn|$ of a $cs^\ast$-network at $x$ is called the {\em $cs^\ast$-character of $X$ at the point $x$}. The cardinal $cs^\ast_\chi(X)=\sup\{ cs^\ast_\chi(X,x): x\in X\}$ is called the {\em $cs^\ast$-character} of  $X$. It is easy to see that if a topological space $X$ has the strong Pytkeev property, then $X$ has countable $cs^\ast$-character.

Now we apply $(PS)$ for the important class of topological vector spaces  having a fundamental sequence of bounded sets.
\begin{proposition}\label{t-C4}
Let $E$ be a tvs with a fundamental sequence of bounded sets. If $E$ has $(PS)$, then $E$ has countable $cs^\ast$-character.
\end{proposition}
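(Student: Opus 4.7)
\medskip

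\textbf{Proof plan for Proposition \ref{t-C4}.} Since $E$ is a topological group under addition, translations are homeomorphisms, so the $cs^\ast$-character at every point equals the one at $0$. Thus it suffices to exhibit a countable $cs^\ast$-network at the zero vector. Let $(B_n)_{n\in\NN}$ be the given fundamental sequence of bounded sets. By replacing each $B_n$ with the balanced hull of $B_1\cup\cdots\cup B_n$ (which is still bounded in a tvs, see e.g.\ \cite{kak}), I may assume that each $B_n$ is balanced and that $B_n\subseteq B_{n+1}$. I then propose the countable candidate
\[
\DD := \Bigl\{\tfrac{1}{m} B_n \,:\, m,n\in\NN\Bigr\},
\]
each of whose members contains $0$ because the $B_n$ are balanced.

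To verify the $cs^\ast$-network condition at $0$, fix a sequence $(x_k)_{k\in\NN}$ in $E$ with $x_k\to 0$ and a neighborhood $U$ of $0$. The key step is to apply the property $(PS)$ to $(x_k)$ viewed as a null sequence in the topological group $(E,+)$: it yields strictly increasing sequences of natural numbers $(m_k)$ and $(n_k)$ such that $m_k x_{n_k}\to 0$ in $E$. In particular, the set $\{m_k x_{n_k}:k\in\NN\}$ is convergent, hence bounded in $E$, and so there exists $n\in\NN$ with $\{m_k x_{n_k}:k\in\NN\}\subseteq B_n$. Equivalently,
\[
x_{n_k}\in \tfrac{1}{m_k} B_n \qquad \text{for every } k\in\NN.
\]

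Since $B_n$ is bounded, it is absorbed by $U$, so there is an $m_0\in\NN$ with $\tfrac{1}{m_0}B_n\subseteq U$. For every $k$ with $m_k\geq m_0$, the ratio $m_0/m_k$ lies in $[-1,1]$, and balancedness of $B_n$ gives $\tfrac{m_0}{m_k} B_n\subseteq B_n$, i.e.\ $\tfrac{1}{m_k}B_n\subseteq \tfrac{1}{m_0}B_n\subseteq U$. Hence $x_{n_k}\in N:=\tfrac{1}{m_0}B_n$ for all sufficiently large $k$. The set $N$ belongs to $\DD$, contains $0$, lies in $U$, and meets $(x_k)$ in infinitely many indices, establishing the $cs^\ast$-network property at $0$ and thereby $cs^\ast_\chi(E)\leq\aleph_0$.

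The main (and really the only nontrivial) obstacle is the scaling mismatch: a priori a null sequence $(x_k)$ is only known to be bounded, so one gets $\{x_k\}\subseteq B_n$, but the network elements have the form $\tfrac{1}{m}B_n$, and there is no reason for $x_k$ to lie in $\tfrac{1}{m}B_n$ for any $m\geq 2$. The property $(PS)$ is exactly what repairs this: it produces a subsequence along which the speed of convergence dominates integer scaling, so that a \emph{scaled} subsequence $(m_k x_{n_k})$ is once again bounded and the inclusion $x_{n_k}\in \tfrac{1}{m_k}B_n$ becomes available with $m_k\to\infty$. Once this is in hand, the rest is an unremarkable absorbency argument combined with balancedness.
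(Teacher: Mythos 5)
Your proof is correct and follows essentially the same route as the paper's: both apply $(PS)$ to extract a scaled subsequence $m_k x_{n_k}\to 0$, place it inside one member $B_n$ of the fundamental sequence, and then combine absorption of $B_n$ by $U$ with balancedness to show that some $\tfrac{1}{m_0}B_n$ lies in $U$ and captures infinitely many $x_{n_k}$. The only cosmetic difference is that the paper works with closed absolutely convex $D_n$ and writes the final inclusion as $x_{n_k}=\tfrac{b}{m_k}\bigl(\tfrac{m_k}{b}x_{n_k}\bigr)\in\tfrac1b D_a$, which is your argument verbatim.
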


\begin{proof}
Let $\{ D_n\}_{n\in\NN}$ be  a fundamental sequence of closed absolutely convex bounded sets in $E$. We claim that the family $\mathcal{N}:=\{ \frac{1}{k} D_n: k,n\in\NN\}$ is a countable
$cs^\ast$-network at zero. Indeed, let $x_n\to 0$. Choose strictly increasing sequences $(m_k)$ and $(n_k)$ of natural numbers such that $m_k x_{n_k} \to 0.$
Fix an open neighborhood $U$ of zero. Take $a\in\NN$ such that $(m_k x_{n_k}) \subseteq D_a$, and choose $b\in\NN$ such that $D_a \subseteq bU$. Then, by  $\frac{1}{b}D_a \subseteq U$, we have
\[
x_{n_k} =\frac{b}{m_k} \cdot \left( \frac{m_k}{b} x_{n_k} \right) \subseteq \frac{b}{m_k} \left(\frac{1}{b} D_a\right) \subseteq \frac{1}{b} D_a
\]
for every $k \geq k_0$, where $b/m_{k_0} <1$. Thus $(x_n)\cap \frac{1}{b} D_a$ is infinite. 
\end{proof}
For a lcs $E$  with a fundamental sequence of bounded sets which  is also {\em qusibarrelled} (this implies that $E$ is a $(DF)$-space, see \cite{PB}) it is known essentially more than in Proposition  \ref{t-C4}.

\begin{remark} \label{re} {\em
Any $(DF)$-space $E$,  by definition,  admits a fundamental sequence of bounded sets and must be $\aleph_{0}$-quasibarrelled, see \cite{kak}.
On the other hand, by Cascales-K{\c{a}}kol-Saxon result, see  Lemma 15.2 from \cite{kak}, it follows that any quasibarrelled $(DF)$-space $L$ admits a $\mathfrak{G}$-base. So $L$ has  the strong Pytkeev property by Theorem 9 of \cite{GKL2}, and hence $L$ has countable $cs^\ast$-character.
We recall also  that the strong dual $F'$ of any Fr\'{e}chet space $F$ is a $(DF)$-space, see \cite{kak}, and $F'$ is quasibarrelled if and only if $F$ is a distinguished space, see  \cite{bierstedt2}. }
\end{remark}

Recall that a tvs $E$ is {\em locally bounded} if $E$ has a bounded neighborhood of zero $U$. Then $E$ is metrizable and $(n^{-1}U)_{n}$ forms a countable base of neighborhoods of zero for $E$. Recall also that for every locally bounded topological vector space $E$ there exists a $p$-norm for some $0<p\leq 1$   generating the original vector topology of $E$, see \cite[Theorem 6.8.3]{jarchow}.

\begin{proof}[Proof of Theorem \ref{t-Metr-TVS-FU}]
We need to show only sufficiency. Let $E$ be a Fr\'echet-Urysohn tvs and let $(B_{n})_{n}$ be a fundamental (increasing) sequence of balanced bounded sets of $E$. Applying Propositions \ref{p-C4} and \ref{t-C4} and  \cite[Theorem 3]{BZ} (stating that every topological groups which is Fr\'echet-Urysohn and has countable $cs^\ast$-character is metrizable) we conclude that $E$ is metrizable. We claim that $E$ is locally bounded. Indeed, let $(U_{n})$ be a decreasing base of balanced neighborhoods of zero in $E$. We show that some $B_{n}$ is a (bounded) neighborhood of zero. If this is not the case  we have $U_{n}\nsubseteq nB_{n}$ for each $n\in\mathbb{N}$. For each $n\in\mathbb{N}$ choose $ n^{-1}x_{n}\in U_{n}\setminus B_{n}$. But then the  set $B:=\{n^{-1}x_{n}:n\in\mathbb{N}\}$ is bounded and not included in any $B_{n}$, a contradiction. The proof is completed if we apply the notice above concerning $p$-normed spaces.
\end{proof}

Note that a {\em metrizable} lcs $E$ is normable if and only if the strong dual $E'$ of $E$ is a Fr\'{e}chet-Urysohn lcs by \cite[Theorem 2.8]{ChMPT} (this result can be derived also from \cite{CKS}).
\begin{corollary} \label{fre}
Let $E$ be the topological product of a family $(E_{j})_{j\in J}$ of metrizable topological vector spaces. Then $E$ has a fundamental sequence of bounded sets if and only if $J$ is finite.
\end{corollary}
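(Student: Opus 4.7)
The plan is to focus on the nontrivial implication, namely that $E$ having a fundamental sequence of bounded sets forces $J$ to be finite, via a direct diagonal-type construction; the converse follows from taking coordinatewise products of fundamental sequences in each factor.

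First, I would reduce to the case where every $E_j$ is nontrivial, since zero factors can be dropped from the product without affecting anything. I would then establish the key preliminary observation that in any nontrivial Hausdorff topological vector space $F$, the whole space $F$ is not bounded as a subset of itself. Indeed, if $F$ were bounded and $U$ were any balanced neighborhood of zero, then $F \subseteq \lambda U$ for some $\lambda > 0$, so $F = \lambda^{-1} F \subseteq U$, forcing $F = U$ and thus the topology on $F$ to be indiscrete; Hausdorffness then collapses $F$ to $\{0\}$, contradicting nontriviality.

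The main argument runs as follows. Assume $(B_n)_{n \in \NN}$ is a fundamental sequence of bounded sets in $E$ and, for contradiction, that $J$ is infinite. Pick pairwise distinct $j_1, j_2, \ldots \in J$. For each $k \in \NN$, the projection $\pi_{j_k}(B_k) \subseteq E_{j_k}$ is bounded and hence, by the preliminary observation, a proper subset of $E_{j_k}$, so I may pick $x_k \in E_{j_k} \setminus \pi_{j_k}(B_k)$. Define $z \in E$ by $z_{j_k} := x_k$ for each $k$ and $z_j := 0$ otherwise. Since $\{z\}$ is bounded in $E$, the fundamental-sequence property yields some $m \in \NN$ with $z \in B_m$; but then $x_m = \pi_{j_m}(z) \in \pi_{j_m}(B_m)$, contradicting the choice of $x_m$. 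Hence $J$ must be finite.

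The converse is immediate and essentially bookkeeping: if $J$ is finite and each factor $E_j$ admits a fundamental sequence $(B_k^{(j)})_k$ of bounded sets (which is forced on each $E_j$ by projecting a fundamental sequence of $E$), then the coordinatewise products $\prod_{j \in J} B_k^{(j)}$ form a fundamental sequence of bounded sets in $E$. The only real obstacle in the whole argument is isolating the observation that nontrivial Hausdorff tvs are unbounded in themselves; once that is in hand, the diagonal construction of $z$ dispatches the problem cleanly without any appeal to Theorem \ref{t-Metr-TVS-FU}.
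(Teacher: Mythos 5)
Your proof of the forward implication (a fundamental sequence of bounded sets forces $J$ to be finite) is correct, and it takes a genuinely different and more elementary route than the paper. The paper passes to the $\Sigma$-product $E_0$ of $E$, invokes Noble's theorem that $E_0$ is Fr\'echet--Urysohn, applies Theorem \ref{t-Metr-TVS-FU} to conclude that $E_0$, and hence its closure $E$, is metrizable, deduces that some $B_m$ is a bounded neighborhood of zero, and finally notes that a locally bounded space cannot contain a copy of $\mathbb{K}^{\NN}$. Your diagonal argument --- choose $x_k\in E_{j_k}\setminus\pi_{j_k}(B_k)$, assemble these into a single point $z$, and observe that the singleton $\{z\}$ is bounded yet lies in no $B_m$ --- bypasses all of that machinery; your preliminary observation that a nontrivial Hausdorff tvs is unbounded in itself is correct, and the reduction to nontrivial factors is the right way to handle degenerate ones. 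Notably, your argument never uses metrizability of the factors, so it proves the implication for arbitrary products of nontrivial Hausdorff topological vector spaces. What the paper's heavier route buys is the extra information that $E$ is metrizable and locally bounded (hence $p$-normable), in the spirit of Theorem \ref{t-Metr-TVS-FU}; for the stated conclusion your argument is cleaner and strictly more general.

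The converse direction of your proposal, however, has a genuine gap: to build a fundamental sequence in $E=\prod_{j\in J}E_j$ for finite $J$ you assume that each factor $E_j$ admits a fundamental sequence of bounded sets, and you justify this by ``projecting a fundamental sequence of $E$'' --- which is exactly the object you are trying to construct. The circularity cannot be repaired, because the ``if'' direction of Corollary \ref{fre} is false as literally stated: take $J$ a singleton and $E_1=\RR^{\NN}$, a metrizable tvs with no fundamental sequence of bounded sets (given candidates $B_n\subseteq\prod_i[-c_i^{(n)},c_i^{(n)}]$ with $c_i^{(n)}=\sup|\pi_i(B_n)|$, the bounded set $\prod_i[-c_i^{(i)}-1,\,c_i^{(i)}+1]$ is contained in no $B_n$). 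The paper's own proof silently establishes only the forward implication, so the defect lies in the statement rather than in your understanding; still, you should either restrict the converse to factors that themselves admit fundamental sequences of bounded sets --- in which case your coordinatewise product argument is fine --- or drop it.
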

\begin{proof}
Let $E_{0}$ be the $\Sigma$-product of $E$, i.e. \[
E_{0}:=\{(x_{i})\in E: |j\in J:x_{j}\neq 0|\leq\aleph_{0}\}.
\]
It is known (see \cite{Nob}) that $E_{0}$ is Fr\'echet-Urysohn. Assume that $E$ has a fundamental sequence $(B_{n})_{n}$ of bounded sets, so $E_{0}$ admits such a sequence, too.  Theorem \ref{t-Metr-TVS-FU} applies to deduce that $E_{0}$ is metrizable, hence $E$ is metrtizable (since $E_{0}$ is dense in $E$). Then some $B_{m}$ is a bounded neighborhood of zero in $E$, see the proof of Theorem \ref{t-Metr-TVS-FU}. This implies that $J$ is finite, otherwise $E$ would contain $\mathbb{K}^{\mathbb{N}}$ ($\mathbb{K}$ the field of either real or complex numbers) having a bounded neighborhood of  zero, which is impossible.
\end{proof}

\begin{remark}\label{rema} {\em
If in Corollary \ref{fre} each $E_{j}$ is additionally complete and $J$ is uncountable, $E_{0}$ is a Baire nonmetrizable subspace of $E$. By Theorem \ref{tMetr-cn} the space $E_{0}$ does not have a countable $cn$-character. }
\end{remark}

The following lemma supplements Remark \ref{re}.
\begin{lemma} \label{p-Appl}
Let $(E,\tau)$ be a lcs  with a fundamental sequence  $(B_{n})_{n}$ of bounded sets. Then  $E$ endowed with the finest locally convex topology $\xi$ having the same bounded sets as $\tau$  (which exists) has a $\mathfrak{G}$-base and has the strong Pytkeev property. In particular, if $(E,\tau)$ is bornological, then $\tau=\xi$.
\end{lemma}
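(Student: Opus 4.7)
The plan is to identify $\xi$ as the bornologification of $(E,\tau)$ and then reduce the two assertions to results already recalled in Remark~\ref{re}. First I would justify that $\xi$ exists. The collection of all locally convex topologies on $E$ having the same bounded sets as $\tau$ is stable under taking suprema in the lattice of locally convex topologies on $E$; equivalently, one can describe $\xi$ concretely as the topology generated by all seminorms on $E$ that are bounded on every $B_n$. This collection therefore has a largest element, which we denote by $\xi$. By construction $(E,\xi)$ is bornological and shares the fundamental sequence $(B_n)_n$ of bounded sets with $(E,\tau)$.

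Next, I would observe that every bornological locally convex space is quasibarrelled and that a lcs with a fundamental sequence of bounded sets that is $\aleph_0$-quasibarrelled is, by definition, a $(DF)$-space. Consequently $(E,\xi)$ is a quasibarrelled $(DF)$-space. The Cascales--K\c{a}kol--Saxon result (Lemma~15.2 of \cite{kak}, recalled in Remark~\ref{re}) then supplies a $\mathfrak{G}$-base for $(E,\xi)$. Invoking Theorem~9 of \cite{GKL2}, also recalled in Remark~\ref{re}, any quasibarrelled lcs with a $\mathfrak{G}$-base has the strong Pytkeev property; hence so does $(E,\xi)$.

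For the final clause I would use the standard characterization of bornologicity: a lcs $(E,\tau)$ is bornological precisely when $\tau$ is already the finest locally convex topology on $E$ having the given bornology. Thus $\tau = \xi$ whenever $(E,\tau)$ is bornological, which closes the argument.

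There is not really a main obstacle in this lemma; it is a packaging of the definition of the bornologification together with two cited external results. The only spot requiring a bit of care is to verify that the supremum of locally convex topologies with the same bounded sets as $\tau$ still has the same bounded sets (so that $(B_n)_n$ survives as a fundamental sequence for $\xi$), which follows from the seminorm description of $\xi$ given in the first step.
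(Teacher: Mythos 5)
Your proposal is correct, but it reaches the conclusion by a different route than the paper. For the existence of $\xi$ the paper does not take a supremum of topologies: it realizes $\xi$ concretely as the inductive limit of the sequence of normed spaces $E_n=\mathrm{span}(B_n)$ equipped with the Minkowski functional norms of the (absolutely convex) sets $B_n$, checks that this inductive limit is bornological with the same bounded sets as $\tau$, and concludes maximality from that; your abstract description via seminorms bounded on every $B_n$ yields the same topology and is equally valid, and you rightly flag the one point needing care, namely that $\xi$ and $\tau$ share the bounded sets so that $(B_n)_n$ survives as a fundamental sequence. The real divergence is in the second step: the paper obtains the $\mathfrak{G}$-base and the strong Pytkeev property by invoking \emph{the proof} of Theorem~3 of \cite{GKL2}, which works directly with the (LB)-type inductive limit structure of $(E,\xi)$; you instead observe that $(E,\xi)$, being bornological (hence quasibarrelled) with a fundamental sequence of bounded sets, is a quasibarrelled $(DF)$-space, and then apply the Cascales--K{\c{a}}kol--Saxon lemma for the $\mathfrak{G}$-base followed by Theorem~9 of \cite{GKL2} for the strong Pytkeev property. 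Your route has the advantage of relying only on the \emph{statements} of cited results (both already recalled in Remark~\ref{re} and Corollary~\ref{c:Cosmic-LCS}(iv)) rather than on the internals of a proof from another paper, at the cost of routing through the $(DF)$-space formalism; the paper's route is more self-contained in that it exhibits the structure of $(E,\xi)$ explicitly. The final clause is handled identically in both arguments.
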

\begin{proof}
We may assume that all sets $B_{n}$ are absolutely convex. For each $n\in\mathbb{N}$ let $E_{n}$ be the linear span of $B_{n}$ endowed with the Minkowski functional norm topology. Let $(E,\xi)$ be the strict inductive limit space of the sequence $(E_{n})_{n}$ of normed spaces.   Then $(E,\xi)$ is bornological, i.e. every absolutely convex bornivorous set  in $(E,\xi)$ is a $\xi$-neighborhood of zero, and the topologies $\xi$ and $\tau$ have the same bounded sets. Then clearly  the topology  $\xi$ is the finest one as we claimed. By the proof of Theorem 3 of \cite{GKL2}  the space $(E,\xi)$ has a $\mathfrak{G}$-base and the strong Pytkeev property.  Finally, if $(E,\tau)$ is bornological, then (by definition) we have $\tau=\xi$.
\end{proof}

\begin{proposition}
Let $(E,\tau)$ be a lcs such that:
\begin{enumerate}
\item[{\rm (i)}] $E$ admits a fundamental sequence  $(B_{n})_{n}$ of absolutely convex  bounded sets.
\item[{\rm (ii)}] Every linear functional $f$ over $E$ is continuous if and only if any restriction $f|_{B_{n}}$ is continuous.
\end{enumerate}
Then $E$ with the weak topology $\sigma(E,E')$  is angelic and a $\sigma(E,E')$-compact set $K$ is $\sigma(E,E')$-metrizable if and only if $K$ is contained in a $\sigma(E,E')$-separable subset of $E$.
\end{proposition}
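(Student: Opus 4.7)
The strategy is to place $(E,\tau)$ inside the Cascales-Orihuela class $\mathfrak{G}$ of locally convex spaces, i.e., those admitting an $\NN^\NN$-indexed resolution of the dual $E'$ with an appropriate pointwise boundedness property on points of $E$. Once this has been established, both conclusions of the proposition --- angelicity of $(E,\sigma(E,E'))$ and the characterization of weakly metrizable compacta as those sitting inside a weakly separable subset --- follow at once from the classical Cascales-Orihuela angelicity theorem for the class $\mathfrak{G}$ (see \cite{CO}).

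Concretely, for each $\alpha=(\alpha_n)_{n\in\NN}\in\NN^\NN$ I would set
\[
A_\alpha := \{ f\in E' : |f(x)| \le \alpha_n \text{ for every } x\in B_n \text{ and every } n\in\NN\},
\]
and verify that $\{A_\alpha : \alpha\in\NN^\NN\}$ has the three required features. The monotonicity $A_\alpha\subseteq A_\beta$ for $\alpha\le\beta$ is immediate from the definition. The equality $E'=\bigcup_\alpha A_\alpha$ holds because any $f\in E'$ is bounded on the $\tau$-bounded set $B_n$, so choosing $\alpha_n:=\lceil \sup_{B_n}|f| \rceil +1$ places $f$ into $A_\alpha$. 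Finally, since $(B_n)_n$ is fundamental, every $x\in E$ lies inside some $\mu B_m$, whence $\sup\{|f(x)|:f\in A_\alpha\}\le\mu\alpha_m<\infty$. This exhibits $(E,\tau)$ as a member of class $\mathfrak{G}$ and reduces the proposition to the cited theorem.

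Condition~(ii) enters as the conceptual link with Lemma~\ref{p-Appl}: together they imply that the duality $\langle E,E'\rangle$ is preserved on passage to the finer bornological topology $\xi$, so that the resolution $\{A_\alpha\}$ constructed above can equivalently be obtained as the family of polars of the $\mathfrak{G}$-base of $(E,\xi)$ produced in Lemma~\ref{p-Appl}; this second description is conceptually illuminating but not logically necessary. The one genuinely substantive point in the plan is the pointwise boundedness condition for $\{A_\alpha\}$, and that in turn reduces directly to the fundamental character of $(B_n)_n$; the remainder of the argument is black-boxed inside the angelicity theorem of Cascales and Orihuela.
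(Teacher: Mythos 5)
Your plan founders on the definition of the Cascales--Orihuela class $\mathfrak{G}$: membership requires that in each $A_\alpha$ \emph{sequences be equicontinuous} with respect to the topology of $E$, not merely that each $A_\alpha$ be pointwise bounded on $E$. You verify only monotonicity, covering and pointwise boundedness, and these three properties follow from hypothesis (i) alone; so your argument, if valid, would show that \emph{every} lcs with a fundamental sequence of bounded sets is weakly angelic. That is false: take $E=\ell^\infty(\Gamma)$ with the topology $\sigma(\ell^\infty(\Gamma),\ell^1(\Gamma))$ for uncountable $\Gamma$. The balls $B_n=nB_{\ell^\infty(\Gamma)}$ form a fundamental sequence of absolutely convex bounded sets and your sets $A_\alpha=\bigcap_n\alpha_nB_n^{\circ}$ enjoy all three of your properties, yet here $\sigma(E,E')$ is the weak$^*$ topology and the unit ball is homeomorphic to $[-1,1]^{\Gamma}$, which is not angelic (the characteristic functions of finite subsets of $\Gamma$ accumulate at the constant function $1$, but no sequence of them converges to it, so this compactum is not Fr\'echet--Urysohn). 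The point is that your $A_\alpha$ are uniformly bounded on bounded sets, hence strongly bounded in $E'$, but nothing in the hypotheses makes their sequences $\tau$-equicontinuous.

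This is also why your dismissal of hypothesis (ii) as ``not logically necessary'' is exactly backwards: (ii) is the whole point. The paper's proof passes to the topology $\xi$ of Lemma \ref{p-Appl} (the inductive limit of the normed spaces spanned by the $B_n$), for which the sets $A_\alpha$ \emph{are} equicontinuous, being the polars of the members $\mathrm{acx}\left(\bigcup_n\alpha_n^{-1}B_n\right)$ of the $\mathfrak{G}$-base of $(E,\xi)$; thus $(E,\xi)$ genuinely lies in class $\mathfrak{G}$. Hypothesis (ii) then guarantees $(E,\tau)'=(E,\xi)'$, so the weak topology is unchanged and the Cascales--Orihuela angelicity theorem together with \cite[Corollary 9]{fkm} give the conclusion for $\sigma(E,E')$ itself. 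Without (ii) one only obtains angelicity of $E$ in the possibly strictly finer topology $\sigma(E,(E,\xi)')$, and angelicity does not descend to the coarser $\sigma(E,E')$. To repair your write-up you must verify equicontinuity of the $A_\alpha$ with respect to $\xi$ rather than mere boundedness, and invoke (ii) to identify the two duals --- at which point you have reproduced the paper's argument.
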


\begin{proof}
Let $\xi$ be the locally convex topology as in Lemma \ref{p-Appl}.  Observe that  $\tau$ and $\xi$ have the same continuous linear functionals; hence the both spaces $(E,\tau)$ and $(E,\xi)$ have the same weak topology. Since $(E,\xi)$ has a $\mathfrak{G}$-base by Lemma \ref{p-Appl}, we apply  Cascales-Orihuela's  results, see   \cite[Proposition 11.3]{kak} and \cite[Corollary 9]{fkm} to complete the proof.
\end{proof}



\begin{example} \label{exa-Cc(b)} {\em
Let $X=[0,\mathfrak{b})$, where $\mathfrak{b}$ is the small uncountable cardinal equal to the smallest cardinality of a subset of $\NN^\NN$ which cannot be covered by a $\sigma$-compact subset of $\NN^\NN$. It is well-known that the cofinality $\mathrm{cf}(\mathfrak{b})$ of $\mathfrak{b}$ is uncountable, and hence $X$ is not Lindel\"{o}f. The space $C_c(X)$ has a $\GG$-base by Proposition 16.14 of \cite{kak}; in particular, $C_c(X)$ has countable $cs^\ast$-character by Theorem 3.12 of \cite{GKL}. On the other hand, since $X$ is not Lindel\"{o}f,  $C_c(X)$ has uncountable tightness by a result of McCoy (see \cite[Lemma 16.4]{kak}). Taras Banakh noted that the pseudocharacter $\psi(C_c(X))$ of $C_c(X)$ is uncountable (this follows from the inequality $\mathrm{cf}(\mathfrak{b})> \aleph_0$ and the Ferrando-K{\c{a}}kol duality theorem, see \cite{feka}), and  hence $C_c(X)$ is not submetrizable that gives a negative answer to Question 2.15 of \cite{GKL}. Finally, we note that the space $C_c(X)$ is not  a $\sigma$-space because $\psi(C_c(X))>\aleph_0$ (see \cite[4.3]{gruenhage}). }
\end{example}

\bibliographystyle{amsplain}

\end{document}